\documentclass[a4paper,12pt,reqno]{amsart}
\usepackage{amsmath}
\newtheorem{thm}{Theorem}[section]
\newtheorem{lemma}[thm]{Lemma}
\newtheorem{prop}[thm]{Proposition}

\newtheorem{thmintro}{Theorem}
\theoremstyle{remark}

\newtheorem*{rem}{Remark}
\newtheorem*{acknowledgement}{Acknowledgment}

\numberwithin{equation}{section}

\newcommand{\C}{\mathbb{C}}

\newcommand{\N}{\mathbb{N}}
\newcommand{\Z}{\mathbb{Z}}
\newcommand{\R}{\mathbb{R}}
\newcommand{\eps}{\varepsilon}

\def\dim{\operatorname{dim}}
\def\diam{\operatorname{diam}}
\newcommand{\Ima}{\operatorname{Im}}
\newcommand{\Rea}{\operatorname{Re}}
\newcommand{\Unb}{{\rm{Unb}}}
\newcommand{\Esc}{{\rm{Esc}}}
\newcommand{\abs}[1]{\left|#1\right|}
\begin{document}

\title[Escape rate and Hausdorff measure]{Escape rate
and Hausdorff measure for entire functions}
\thanks{Both authors are supported by the Deutsche
Forschungsgemeinschaft, grant no. Be 1508/7-2.}
\author{Walter Bergweiler}
\author{J\"{o}rn Peter}
\address{Mathematisches Seminar, Christian-Albrechts-Universit\"{a}t zu Kiel,
 24118 Kiel, Germany}
\begin{abstract}
The escaping set of an entire function is the set of points that tend
to infinity under iteration. We consider subsets of the escaping set
defined in terms of escape rates and obtain upper and lower bounds
for the Hausdorff measure of these sets with respect to certain gauge functions.
%\keywords{Escaping set, Julia set, Hausdorff measure, Hausdorff dimension, iterated function scheme, limit set, Eremenko-Lyubich class}
\end{abstract}
\subjclass{Primary 37F10; Secondary 30D05, 30D15}

\maketitle

\section{Introduction and results} \label{sec1}

The \emph{escaping set} $I(f)$ of a non-linear and non-constant entire function $f$ consists of 
all points in the complex plane $\C$ which tend to $\infty$ under iteration 
of~$f$. The \emph{Julia set} $J(f)$ is the subset of
$\C$ where the iterates fail to be normal. By a result of 
Eremenko \cite{e} we have $J(f)=\partial I(f)$.
For an introduction to the dynamics of entire functions we refer to~\cite{Bergweiler93}.

Considerable attention has been paid to the Hausdorff dimension and 
Hausdorff measures of these sets; see \cite{Stallard08} for a survey.
We denote the Hausdorff dimension of a subset $A$ of $\C$ by $\dim A$ and
the Hausdorff measure of $A$ with respect to a gauge function $h$ by $H_h(A)$; 
see section~\ref{sec2} for these definitions and some discussion of them.

The first results on the Hausdorff dimension of Julia sets of transcendental entire
functions are due to McMullen who proved \cite[Theorem~1.2]{m} that $\dim J(\lambda e^z)=2$
for $\lambda\in\C$, $\lambda
\neq 0$. He notes \cite[p.~336]{m} that
$H_h(J(\lambda e^z))=\infty$ if
$h(t)=t^2\log\log\ldots\log(1/t)$, for any number of logarithms.
A refinement of this result is given in~\cite{Peter}.
McMullen actually showed that
$\dim I(\lambda e^z)=2$ and then noted that
$I(\lambda e^z)\subset J(\lambda e^z)$. 

McMullen's results initiated a large body of research on Hausdorff dimension
and measure of Julia sets and escaping sets;
see the survey \cite{Stallard08} as
well as more recent results in \cite{b,bkz,bks,Peter2,Rempe09,rs,s}.
Most of these results,
exceptions being \cite{bk,Stallard90}, have been concerned with the
\emph{Eremenko-Lyubich class} $B$ which consists of all transcendental entire
functions $f$ for which the set sing$(f^{-1})$ of singularities of
the inverse of $f$ is bounded. We note that sing$(f^{-1})$ coincides with the
set of
critical and finite asymptotic values of~$f$. For $f(z)=\lambda e^z$ we
have sing$(f^{-1})=\{0\}$ and thus $f\in B$. Eremenko and
Lyubich \cite[Theorem~1]{el} proved that if $f\in B$, then $I(f)\subset J(f)$ and thus
$J(f)=\overline{I(f)}$.

Baker \cite{b2} showed that if $f$ is transcendental entire, not necessarily in
the Eremenko-Lyubich class, then $J(f)$ contains continua.
Rippon and Stallard~\cite{rs4} proved that $I(f)$ also contains
continua.
In fact, they showed 
\cite[Theorem 1.3]{rs3}
that $I(f)\cap J(f)$ contains continua.
It follows that $\dim J(f)\geq 1$ and $\dim I(f)\geq 1$ for every transcendental entire function~$f$.
Bishop~\cite{Bishop} has recently constructed an entire transcendental function $f$ 
satisfying $\dim J(f)=1$. On the other hand,
Stallard (\cite{s2}, see also \cite{bkz} and
 \cite[Theorem~1.5]{Bergweiler08}) proved that if $f\in B$, then 
$\dim J(f)>1$.
Moreover, she showed \cite{s4} that for each $d\in(1,2]$ there exists a function $f\in B$ such that $\dim J(f)=\dim I(f)=d$.

In contrast to Stallard's result that $\dim J(f)>1$ for $f\in B$, Rempe and Stallard \cite{rs} showed that there exists a function $f\in B$ with $\dim I(f)=1$.
Thus $H_h(I(f))=0$ if $h(t)=t^{1+\eps}$ with $\eps>0$.

In recent studies of the escaping set, several subsets of the escaping set defined in terms of escape rates have turned out to be important; see, e.g., \cite{rs2,rs3}.
For example, the key idea in the proof 
in \cite{rs4} that $I(f)$ has at least one unbounded component
was to consider a subset $A(f)$ of $I(f)$ which, roughly speaking, consists of 
all points tending to $\infty$ at the fastest rate
compatible with the growth of $f$.
This set $A(f)$, now called the \emph{fast 
escaping set}, was introduced in \cite{bh} to study a problem concerning 
permutability. It is shown in~\cite{rs4} that all
components of $A(f)$ are unbounded continua.
In contrast, the set $I(f)\setminus A(f)$ need not contain continua.
In fact, 
if $f(z)=\lambda e^z$ with $0<\lambda<1/e$, then
$I(f)\setminus A(f)$ is totally disconnected; cf. \cite{Devaney84,Mayer90,rrs}.
Theorem~\ref{thm1} below implies that 
$\dim (I(f)\setminus A(f))\geq 1$ for $f\in B$.

For a sequence $(p_n)$ of positive real numbers tending to $\infty$ we denote 
by $\Esc(f,(p_n))$ the set of all $z\in I(f)$ such that 
$\abs{f^n(z)}\leq p_n$ for all large~$n$.
We prove that $H_h(\Esc(f,(p_n)))=\infty$ if $h(t)$ tends 
to $0$ slower than the functions $t\mapsto t^{1+\eps}$.

\begin{thmintro} \label{thm1}
Let $f\in B$ and let $h$ be a gauge function satisfying
\begin{equation} \label{1a}
\lim_{t\to 0}\frac{\log h(t)}{\log t}=1.
\end{equation}
Let $(p_n)$ be a sequence of positive real numbers tending to $\infty$. Then
$$H_h(\Esc(f,(p_n)))=\infty.$$
In particular, $H_h(I(f))=\infty$ and
$\dim \Esc(f,(p_n))\geq 1$.
\end{thmintro}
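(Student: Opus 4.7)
The plan is to build, for each gauge $h$ satisfying \eqref{1a}, a Cantor-type subset of $\Esc(f,(p_n))$ with infinite $H_h$-measure, using the logarithmic Eremenko--Lyubich model. Since $f\in B$, one first fixes $R>0$ large enough that all finite critical and asymptotic values of $f$ lie in $\{|w|<R\}$, and passes to the standard model $F\colon V\to H$, where $H=\{\Rea z>\log R\}$, $V=F^{-1}(H)$ is a disjoint union of unbounded tracts $T_\alpha$, and each restriction $F|_{T_\alpha}\colon T_\alpha\to H$ is a conformal isomorphism with inverse branch $\phi_\alpha$. The main analytic input is the Eremenko--Lyubich expansion estimate $|F'(z)|\geq c\,\Rea z$ on each tract, obtained by applying the Koebe distortion theorem to $F|_{T_\alpha}$.

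Writing $q_n=\log p_n$, I would choose target rectangles $Q_n\subset H$ centred at real part $\approx q_n$, with imaginary extent chosen large enough (depending on $h$) so that $Q_n$ meets a prescribed number $N_n$ of $2\pi i$-translated tracts. For each admissible sequence $(\alpha_1,\dots,\alpha_n)$ of tracts, define the level-$n$ cylinder
\[
C_{\alpha_1\cdots\alpha_n}=\{\,z:F^{j-1}(z)\in T_{\alpha_j}\text{ for }j=1,\dots,n,\ F^n(z)\in Q_n\,\}
\]
and put $K=\bigcap_n\bigcup_{\vec\alpha}C_{\vec\alpha}$. The construction guarantees $\Rea F^k(z)\leq q_k$ for $z\in K$, hence $\exp(K)\subset\Esc(f,(p_n))$. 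Distribute unit mass on $K$ by assigning each level-$n$ cylinder the measure $1/(N_1\cdots N_n)$, and estimate $\diam C_{\alpha_1\cdots\alpha_n}\lesssim\prod_{k=1}^n|F'(\zeta_k)|^{-1}$ via Koebe. The mass distribution principle then gives $H_h(K)\geq c$ provided the $Q_n$ (and thus $N_n$) are chosen so that $(N_1\cdots N_n)\cdot h(\diam C_{\vec\alpha})$ stays bounded below, which is arrangeable because \eqref{1a} forces $h(t)\geq t^{1+\eps}$ only for arbitrarily small $\eps$. Performing the construction on countably many disjoint $2\pi i$-translated strips produces disjoint copies of $K$ inside $\Esc(f,(p_n))$, yielding $H_h(\Esc(f,(p_n)))=\infty$; the bound $\dim\Esc(f,(p_n))\geq 1$ follows because \eqref{1a} also implies $h(t)\leq t^{1-\eps}$ for small $t$, so $H_h=\infty$ forces $H^{1-\eps}=\infty$ for every $\eps>0$.

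The main obstacle will be the quantitative calibration of the Cantor construction to the given gauge $h$. Unlike Stallard's proof that $\dim J(f)>1$ for $f\in B$, where a fixed target dimension is available and the tract configuration can be chosen accordingly, here one must track precisely how $h(t)$ compares to the linear gauge $t$ across the relevant scales and select the imaginary extent of each $Q_n$ in response. This is made delicate by the slow-escape constraint: when $p_n$ grows slowly the rectangles $Q_n$ sit at small real parts, limiting both the expansion factor $|F'|$ and the branching $N_n$ available at that height. The condition \eqref{1a}, rather than a pointwise bound of the form $h(t)\geq t^s$ for a fixed $s<1$, is exactly what permits the branching to be made sufficiently large relative to the contraction at every level.
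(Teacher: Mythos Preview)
Your framework (logarithmic model, Cantor construction, mass distribution principle) matches the paper's, but there is a genuine gap at the quantitative core. A single inverse branch $\phi_\alpha$ of $F$ contracts by a factor comparable to $1/\Rea w$ on a square of side comparable to its real part, while only about $\Rea w/(2\pi)$ of the $2\pi i$-translated tracts meet such a square; the resulting one-step scheme satisfies $\sum_\alpha|\phi_\alpha'|\lesssim 1/(2\pi)<1$, so it does \emph{not} produce a limit set of dimension $\geq 1$, and no choice of imaginary extent of your $Q_n$ repairs this without destroying the bounded-distortion needed for Koebe. The paper's key device is to work with inverse branches of $F^2$: Lemma~\ref{th32} (a sharpening of the Bara\'nski--Karpi\'nska--Zdunik argument) yields, on the square $S(F(z_x),\tfrac14 h(x))$, an iterated function scheme with $\sum_j\diam W_j\gtrsim h(x)^3/h'(x)$, and a Borel-type growth lemma guarantees $h'(x)\le h(x)^{1+\delta}$ off a set of finite measure, so that on well-chosen squares one has $\sum_j\diam W_j\gg h(x)$, i.e.\ the normalized contractions satisfy $\sum_i\tilde b_{k,i}>1$. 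This strict excess above dimension $1$ is exactly what drives the abstract measure lemma (Lemma~\ref{th23}) and your proposal contains no substitute for it.

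There is a second, related gap in the handling of the slow-escape constraint. Placing the $n$-th target rectangle at real part $q_n=\log p_n$ is too rigid for two reasons: first, the real part of $\phi_\alpha(Q_n)$ is determined by $f$ (and is the same for every $\alpha$, by $2\pi i$-periodicity), so there is no reason it should coincide with $q_{n-1}$; second, as you yourself note, when $p_n$ grows slowly there is neither enough expansion nor enough branching at height $q_n$. The paper sidesteps both issues with a ``linger then jump'' architecture: orbits remain in a fixed square $S_k$ for many iterations of $F^2$ under the scheme $P_k$ (whose dimension-$>$-$1$ property lets it accumulate an arbitrary surplus of mass relative to any gauge satisfying~\eqref{1a}), and only at rare times $n_i$ do they jump to $S_{k+1}$ via a single transition map $Q_k$. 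Choosing the lingering times large enough both absorbs the cost of the jump and forces $\Rea F^n(z)\le\log p_n$ regardless of how slowly $(p_n)$ grows. Without an analogue of this mechanism, the calibration step you flag as the ``main obstacle'' cannot actually be carried out.
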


In contrast to Theorem~\ref{thm1}, we show that the set of points which 
escape with a definitive speed (in particular the fast escaping set) can 
have zero $H_h$-measure for certain gauge functions satisfying \eqref{1a}.

To formulate the result,
let again $(p_n)$ be a real sequence 
tending to~$\infty$ and
let $\Unb(f,(p_n))$ be the set of all 
$z\in\C$ such that $\abs{f^n(z)}> p_n $ for infinitely many~$n$.
Thus 
$$\Esc(f,(p_n))=I(f)\setminus \Unb(f,(p_n)).$$
Note that points in $\Unb(f,(p_n))$ have unbounded orbits, but need not be 
in $I(f)$.

\begin{thmintro}  \label{thm2}
Let $(p_n)$ be a real sequence tending to $\infty$ and let $h$ be a 
gauge function satisfying
\begin{equation} \label{1b}
\lim_{t\to 0}\frac{h(t)}{t}=0.
\end{equation}
Then there exists 
$f\in B$ such that
$$H_h(\Unb(f,(p_n)))=0.$$
\end{thmintro}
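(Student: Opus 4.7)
My plan is to construct, depending on $(p_n)$ and $h$, a disjoint-type function $f\in B$ with very narrow logarithmic tracts, and to estimate $H_h(\Unb(f,(p_n)))$ via a Borel--Cantelli-type covering argument carried out in logarithmic coordinates.

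First, I would reduce the problem to a covering estimate. Writing
\[
\Unb(f,(p_n)) = \bigcap_{N\in\N} \bigcup_{n\geq N} E_n, \qquad E_n := \{z\in\C : \abs{f^n(z)} > p_n\},
\]
and using that $f\in B$ has no escaping Fatou components (so $\Unb(f,(p_n))\subset J(f)$), $\sigma$-subadditivity of $H_h$ reduces the problem to showing $H_h(\Unb(f,(p_n))\cap D(0,R))=0$ for every $R>0$. Since $h(t)\leq t$ for small $t$, it is enough to produce, for each $R$, countable covers $\mathcal C_n$ of $E_n\cap D(0,R)\cap J(f)$ with $\sup_{C\in\mathcal C_n}\diam C =: \delta_n\to 0$ and $\sum_n\sum_{C\in\mathcal C_n}\diam C<\infty$. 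Indeed, the tail estimate
\[
\sum_{n\geq N}\sum_{C\in\mathcal C_n}h(\diam C)\leq \Bigl(\sup_{0<t\leq\delta_N}\frac{h(t)}{t}\Bigr)\sum_{n\geq N}\sum_{C\in\mathcal C_n}\diam C\to 0\quad(N\to\infty)
\]
then exhibits $\delta_N$-covers of $\Unb(f,(p_n))\cap D(0,R)$ with arbitrarily small $H_h$-content.

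Second, I would construct $f\in B$ of disjoint type whose (single) logarithmic tract $\tilde T$ is a simply connected domain contained in a half-strip of width $g(\Rea\zeta)$ decaying as $\Rea\zeta\to\infty$, with $g$ chosen in terms of $(p_n)$. This can be done with the Eremenko--Lyubich machinery as in~\cite{rs}. The log-transform $F\colon\tilde T\to\tilde W=\{\Rea\zeta>\log R_0\}$ is then a conformal bijection, the $2\pi i$-lifts $\tilde T+2\pi ik$ are pairwise disjoint, and the disjoint-type condition (enforced by multiplying a suitable model by a small parameter so that the singular values lie in an attracting basin) forces orbits in $J(f)$ to remain in the union of tract lifts. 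Using the Eremenko--Lyubich expansion $\abs{F'(\zeta)}\geq c\,\Rea F(\zeta)$ and Koebe distortion, each inverse branch of $F$ contracts by a factor at most a constant multiple of $g(\Rea w)/\Rea w$, and iterating $n$ times (with the last step pulling back $\{\abs w>p_n\}$) yields a total contraction that decays rapidly. Summing the diameters of all components of $f^{-n}(\{\abs w>p_n\})$ meeting $D(0,R)\cap J(f)$, over all tract lifts and all itineraries, yields a bound that is summable in $n$ when $g$ decays fast enough.

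The hardest step is the design of $f$ itself: the decay rate of $g$ must be tuned to the given $(p_n)$ so as to balance the proliferation of tract-lift components against the contraction of the inverse branches, producing a summable $\sum_n\sum_{C\in\mathcal C_n}\diam C$, while simultaneously ensuring $f\in B$, the disjoint-type property, and the applicability of the Eremenko--Lyubich expansion. Once this construction is in place, the first step converts the summability into $H_h(\Unb(f,(p_n))\cap D(0,R))=0$, and $\sigma$-subadditivity over an exhaustion of $\C$ by disks gives the theorem.
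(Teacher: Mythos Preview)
Your Borel--Cantelli reduction cannot work. After the harmless reduction $p_n\leq n$ (passing to a smaller sequence only enlarges $\Unb(f,(p_n))$), the sets $E_n$ eventually all contain a common unbounded continuum. Indeed, by~\cite{rs4} there is a closed level of the fast escaping set all of whose components are unbounded; on any such component $\Gamma$ the iterates $|f^n(z)|$ grow at least like the iterated maximum modulus, so $|f^n(z)|>n\geq p_n$ for all $n\geq n_0$ with $n_0$ independent of $z\in\Gamma$, and hence $\Gamma\subset E_n$ for every $n\geq n_0$. For $R$ large enough that $\Gamma\cap D(0,R)$ has diameter $d>0$, every cover $\mathcal C_n$ of $E_n\cap D(0,R)\cap J(f)\supset\Gamma\cap D(0,R)$ satisfies $\sum_{C\in\mathcal C_n}\diam C\geq d$, since the diameters in any cover of a connected set sum to at least the diameter of the set. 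Thus $\sum_n\sum_C\diam C=\infty$ for every transcendental entire~$f$, and your tail inequality yields only $0\cdot\infty$.

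The underlying issue is that your argument uses $h$ only through the factor $\sup_{t\leq\delta_N}h(t)/t$; if it worked it would give $H_h(\Unb(f,(p_n)))=0$ for \emph{every} gauge function with $h(t)=o(t)$, contradicting the fact that $\Unb(f,(p_n))\supset A(f)$ contains continua. The paper's proof therefore builds $h$ into the construction of~$f$: writing $h(t)=tg(t)$, the tract width $\phi$ is determined (Lemma~\ref{th53}) by a functional inequality $g(\phi(x+1/n^2))\leq\tau(\phi(x))$ involving~$g$. And instead of covering $E_n$ globally, it uses a pointwise density criterion (Lemma~\ref{th58}, from the Besicovich covering lemma): for each $\xi\in\Unb(f,(p_n))$ one selects a subsequence of $n$ along which $|f^n(\xi)|$ exceeds all earlier iterates, bounds $\rho_n=1/|(f^n)'(\xi)|$ from below in terms of~$\phi$, and covers $D(\xi,\delta_n)\cap\Unb(f,(p_n))$ by disks whose $h$-sum is at most $\eps\,\delta_n^2$. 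Choosing the scale depending on the point is precisely what avoids the continuum obstruction.
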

Note that \eqref{1b} cannot be weakened since $A(f)$
contains continua~\cite{rs4}.
Similarly, \eqref{1a} cannot be weakened by~\cite{rs}.

There exist gauge functions $h$ satisfying both \eqref{1a} and \eqref{1b}.
Thus Theorem~\ref{thm1} and Theorem~\ref{thm2} imply
that the set of slow escaping points can be larger (from 
a measure-theoretic point of view) than the fast escaping set.
This is in contrast to the situation for the functions $\lambda e^z$ where
the fast escaping set is larger~\cite{KU}.

We describe the idea of the proof of Theorem~\ref{thm1}.
An important ingredient is a result of Bara\'nski, Karpi\'nska and
Zdunik \cite{bkz} saying that
the hyperbolic dimension  of the Julia set of
a transcendental meromorphic function with a logarithmic tract
is strictly greater than~$1$.
In the proof
the authors construct iterated function schemes
consisting of inverse branches of the second iterate of $F$ which map
a certain square $S$ into itself. Here $F$ is a function obtained
from $f$ by a logarithmic change of variable
(cf.\ section~\ref{sec3}).
As the result is stated in~\cite{bkz} not quite in the form we need,
we formulate a version of it as Lemma~\ref{th34} below;
cf.\ the remark at the end of section~\ref{sec4} for the difference
to~\cite{bkz}.  
For the convenience
of the reader we also include the proof.

We will take a suitable sequence $(S_k)$ of such squares tending to infinity
and obtain a subset of $\Esc(f,(p_n))$ by considering points in $S_1$ which
stay in $S_1$ under many iterations of $F^2$, are then mapped to $S_2$,
stay there for many iterations of $F^2$, and so forth.

In order to estimate the Hausdorff measure of the set obtained, we extend 
in section~\ref{sec2} 
a result of \cite{f} concerning the
Hausdorff dimension of the limit set of iterated function schemes.
The definitions and basic properties of
Hausdorff measures and the Hausdorff dimension as well
as iterated function schemes are recalled at the beginning of that section.
The results of sections~\ref{sec2} and~\ref{sec3}
 are then combined in section~\ref{sec4} to prove Theorem~\ref{thm1}. 

The function $f$ constructed in Theorem~\ref{thm2} will be large inside
a narrow strip and bounded outside the strip.
In section~\ref{sec5} we will give preliminary results for the construction,
in particular we discuss some results of Ahlfors~\cite{Ahlfors30}
concerning conformal mappings of strips.
The actual proof of Theorem~\ref{thm2}  is then carried out in section~\ref{sec6}.

\section{Hausdorff measures} \label{sec2}
We recall the definition of Hausdorff measure and Hausdorff dimension; see
the book by Falconer \cite{f} for more details.

A \emph{gauge function} (or \emph{dimension function})
 is an increasing, continuous function
$h\colon [0,\eta)\to[0,\infty)$ which satisfies $h(0)=0$, where $\eta>0$.
For $A\subset\C$ and $\delta>0$ we call a sequence $(A_j)$ of
subsets of $\C$ a $\delta$-\emph{cover} of $A$ if $\diam A_j <\delta$ for all $j\in\N$ and
$$A\subset\bigcup_{j=1}^{\infty}A_j.$$
Here $\diam A_j$ denotes the (Euclidean) diameter of $A_j$. For a gauge
function $h$ we put
$$H_h^{\delta}(A)=\inf\left\{\sum_{j=1}^{\infty}h(\diam A_j)\colon (A_j)\text{ is }\delta\text{-cover of }A\right\}$$
and call
$$H_h(A)=\lim_{\delta\to 0}H_h^{\delta}(A)$$
the \emph{Hausdorff
measure} of $A$ with respect to the gauge function~$h$.
Note that $H_h^\delta(A)$ is a non-increasing function of $\delta$
so that the limit defining $H_h(A)$ exists. 
It is possible
that $H_h^\delta(A)=\infty$, meaning that $\sum_{j=1}^\infty h(\diam A_j)$
diverges for all $\delta$-covers $(A_j)$.
Similarly, we may have $H_h(A)=\infty$.
Given gauge functions $h_1,h_2$
and $A\subset\C$, we have
\begin{equation}\label{2j}
H_{h_1}(A)\leq H_{h_2}(A)
\quad\text{if } h_1(t)\leq h_2(t) \text{ for all small }t.
\end{equation}

In the special case that $h(t)=t^s$ for some $s>0$, we call $H_h(A)$ the
$s$-\emph{dimensional Hausdorff measure}. There exists $d\geq 0$ such that
$H_{t^s}(A)=\infty$ for $0<s<d$ and $H_{t^s}(A)=0$ for $s>d$. This value $d$ is
called the \emph{Hausdorff dimension} of $A$ and denoted by $\dim A$. 

The following result \cite[Theorem 7.6.1]{PrzUrb} about the Hausdorff measure 
is known as the mass distribution principle.
Here $D(x,r)$ denotes the open disk of radius $r$ around a point $x\in \C$.
\begin{lemma}\label{th21}
Let $h$ be a gauge function and $A\subset\C$. If there exists a Borel probability measure $\mu$ supported on $A$ such that
$$\lim_{r\to 0}\frac{\mu(D(x,r))}{h(r)}=0\quad \text{for all }x\in A,$$
then $H_h(A)=\infty$.
\end{lemma}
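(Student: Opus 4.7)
The plan is to run the standard mass distribution argument, but because the hypothesis provides only pointwise rather than uniform decay of $\mu(D(x,r))/h(r)$ I first extract, by a level-set (Egorov-type) argument, a large subset on which the decay is uniform. Fixing $\eps>0$ arbitrary, I will show $H_h(A)\geq 1/(2\eps)$; since $\eps$ is arbitrary this forces $H_h(A)=\infty$.

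For the extraction step, set
\[
A_k=\bigl\{x\in\C:\mu(D(x,r))\leq\eps\,h(r)\text{ for every }r\in(0,1/k]\bigr\},\qquad k\in\N.
\]
The map $x\mapsto\mu(D(x,r))$ is lower semicontinuous (from $D(x,r)\subset\liminf D(x_n,r)$ whenever $x_n\to x$), and, together with the left-continuity of $r\mapsto\mu(D(x,r))$ and the continuity of $h$, this reduces the defining condition for $A_k$ to countably many inequalities, making $A_k$ Borel. The hypothesis yields $A\subset\bigcup_k A_k$, so $\mu(A_k)\to 1$, and I pick $k=k(\eps)$ with $\mu(A_k)\geq 1/2$.

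For the cover estimate, let $(U_j)$ be any $\delta$-cover of $A$ with $\delta<1/k$. For each $U_j$ meeting $A_k$, choose $x_j\in U_j\cap A_k$; then $U_j\subset D(x_j,s)$ for every $s>\diam U_j$, and for $\diam U_j<s\leq 1/k$ the defining property of $A_k$ gives $\mu(U_j)\leq\eps\,h(s)$. Letting $s\downarrow\diam U_j$ and using continuity of $h$ yields $\mu(U_j)\leq\eps\,h(\diam U_j)$. Since $(U_j)$ covers $A$ and $\mu(A)=1$, countable subadditivity gives
\[
\tfrac{1}{2}\leq\mu(A_k)\leq\sum_j\mu(U_j\cap A_k)\leq\eps\sum_j h(\diam U_j),
\]
so $H_h^\delta(A)\geq 1/(2\eps)$ for every $\delta<1/k(\eps)$, hence $H_h(A)\geq 1/(2\eps)$. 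The most delicate point, and the step I expect to need the most care, is the passage from a set $U_j$ of diameter $d$ to an open disk of radius $d$: $U_j$ sits in the closed disk $\overline{D(x,d)}$ rather than in $D(x,d)$, and absorbing this mismatch via $s\downarrow d$ is exactly what the continuity of $h$ built into the definition of a gauge function is for.
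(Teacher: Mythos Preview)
The paper does not give its own proof of this lemma; it merely cites it as \cite[Theorem~7.6.1]{PrzUrb}. Your argument is correct and is the standard mass distribution argument, with the extra level-set extraction needed because the hypothesis gives only pointwise (not uniform) decay of $\mu(D(x,r))/h(r)$.

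Two minor points worth making explicit. First, in the inequality $\mu(A_k)\leq\sum_j\mu(U_j\cap A_k)$ you implicitly use that $\bigcup_j U_j$ covers $A_k$ up to a $\mu$-null set; this holds because $\mu$ is supported on $A$ and $(U_j)$ covers $A$, so $\mu\bigl(\C\setminus\bigcup_j U_j\bigr)=0$. Second, the sets $U_j$ in a $\delta$-cover need not be Borel, so quantities like $\mu(U_j)$ are not a priori defined; this is harmless, since one may replace each $U_j$ by a Borel set of the same diameter containing it (e.g.\ its closed convex hull), or simply run the whole argument with outer measure. With these cosmetic clarifications the proof is complete.
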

Let $D\subset\C$ be compact. A mapping $T\colon D\to D$ is called a \emph{contraction}
on $D$ if there exists $c\in(0,1)$ such that $\abs{T(z)-T(w)}\leq c\abs{z-w}$
for all $z,w\in D$. Clearly a contraction is continuous.
Let $T_1,\ldots,T_m$ be contractions on~$D$. Then the family
$T=\{T_1,\ldots,T_m\}$ is
called an \emph{iterated function scheme on D}. It can be shown (see \cite[Theorem 9.1]{f}) that there exists a unique non-empty compact set $X\subset D$ which is invariant for the $T_i$, that is, which satisfies
$$X=\bigcup_{i=1}^m T_i(X).$$
Moreover, we have
$$X=\bigcap_{n=1}^\infty\bigcup_{i_1,\ldots,i_n} 
\left( T_{i_1}\circ\ldots\circ T_{i_n}\right)(F)$$
for every non-empty compact set $F\subset D$ satisfying $T_i(F)\subset F$
for all~$i$. The set $X$ is called \emph{limit set} of the iterated function
scheme~$T$.

The following result \cite[Proposition~9.7]{f} gives a lower bound 
for the Hausdorff dimension of the limit set of an iterated function scheme;
see~\cite[Proposition~9.6]{f} for an analogous upper bound.
\begin{lemma}\label{th22}
Let $D\subset\C$ be compact and $T_1,\ldots,T_m\colon D\to D$ be contractions.
Suppose that $T_i(D)\cap T_j(D)=\emptyset$ for all $i\neq j$ and that for each $i$ there exists $b_i\in(0,1)$ with
\begin{equation}\label{formel99}
\abs{T_i(z)-T_i(w)}\geq b_i\abs{z-w}
\end{equation}
Define  $s$ by $$\sum_{i=1}^m b_i^{s}=1.$$
Then the limit set $X$ of the
iterated function scheme $T=\{T_1,\ldots,T_m\}$ satisfies $\dim X \geq s$.
\end{lemma}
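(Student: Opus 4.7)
The plan is to apply the mass distribution principle (Lemma~\ref{th21}) to a natural probability measure $\mu$ on $X$ and to show that $\mu(D(x,r))\leq Cr^s$ for every $x\in X$ and every small $r>0$. To construct $\mu$, for each finite word $\mathbf{i}=(i_1,\ldots,i_n)$ with $i_k\in\{1,\ldots,m\}$ set $T_{\mathbf{i}}:=T_{i_1}\circ\cdots\circ T_{i_n}$, $X_{\mathbf{i}}:=T_{\mathbf{i}}(X)$, $b_{\mathbf{i}}:=b_{i_1}\cdots b_{i_n}$, and $\mu(X_{\mathbf{i}}):=b_{\mathbf{i}}^s$. Because $\sum_{i=1}^m b_i^s=1$, these assignments are consistent on nested cylinders, so pushing forward the Bernoulli measure with weights $b_i^s$ on $\{1,\ldots,m\}^{\N}$ through the coding map yields a Borel probability measure $\mu$ supported on $X$.

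The key geometric input is a separation estimate among cylinders. Since the sets $T_1(D),\ldots,T_m(D)$ are pairwise disjoint compact subsets of $D$, the quantity $\rho_0:=\min_{i\neq j}\dist(T_i(D),T_j(D))$ is strictly positive. Applying \eqref{formel99} iteratively to $T_{\mathbf{k}}$ for any finite prefix $\mathbf{k}$ and any $i\neq j$ then gives $\dist(T_{\mathbf{k}i}(D),T_{\mathbf{k}j}(D))\geq b_{\mathbf{k}}\rho_0$.

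For the main estimate, fix $x\in X$ and small $r>0$, and let $\mathcal{A}(r)$ denote the antichain of minimal words $\mathbf{j}$ with $b_{\mathbf{j}}<r$, so that the parent word satisfies $b\geq r$. Each infinite coding has a unique prefix in $\mathcal{A}(r)$, hence $\{X_{\mathbf{j}}:\mathbf{j}\in\mathcal{A}(r)\}$ partitions $X$. If $\mathbf{i}\neq\mathbf{j}$ both lie in $\mathcal{A}(r)$, their longest common prefix $\mathbf{k}$ is a strict prefix of both, so $b_{\mathbf{k}}\geq r$, and the previous paragraph yields $\dist(X_{\mathbf{i}},X_{\mathbf{j}})\geq r\rho_0$. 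A standard planar packing argument thus bounds the number of such cylinders that can meet $D(x,r)$ by a constant $N_0$ depending only on $\rho_0$; since each contributes $\mu$-mass at most $r^s$, we obtain $\mu(D(x,r))\leq N_0 r^s$.

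Given this estimate, for arbitrary $\eps>0$ one has $\mu(D(x,r))/r^{s-\eps}\leq N_0 r^\eps\to 0$ as $r\to 0$; Lemma~\ref{th21} with gauge $h(t)=t^{s-\eps}$ therefore gives $H_{t^{s-\eps}}(X)=\infty$, whence $\dim X\geq s-\eps$, and letting $\eps\to 0$ yields $\dim X\geq s$. The principal obstacle is the estimate in the third paragraph: the scale $r$ must be cashed in at the right level of the coding tree, and the separation and packing computations must be carried out using only the lower Lipschitz bounds $b_i$ together with the disjointness constant $\rho_0$, since no upper bound for cylinder diameters in terms of $b_i$ is available in the hypotheses.
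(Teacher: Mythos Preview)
Your argument is correct. The paper does not give its own proof of this lemma; it simply quotes it as \cite[Proposition~9.7]{f}, so there is nothing to compare against. Your route---pushing forward the Bernoulli measure with weights $b_i^{s}$, using the lower Lipschitz bounds to obtain the separation $\dist(X_{\mathbf{i}},X_{\mathbf{j}})\geq r\rho_0$ for distinct words in the stopping antichain $\mathcal{A}(r)$, and then a volume/packing count in $D(x,r)$---is exactly the standard proof in Falconer's book. One small remark: once you have $\mu(D(x,r))\leq N_0 r^{s}$, the usual form of the mass distribution principle already gives $H_{t^{s}}(X)\geq \mu(X)/N_0>0$ and hence $\dim X\geq s$ directly; the detour through $t^{s-\eps}$ and Lemma~\ref{th21} is not needed, though it is of course valid.
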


We need a version of Lemma \ref{th22} for sequences of iterated function schemes. Let $(T_k)_{k\in\N}$ be such a sequence, with $T_k=\{T_{k,1},\ldots,T_{k,m_k}\}$. Define the \emph{limit set} $X$ of $(T_k)$ by 
$$X=\bigcap_{k=1}^\infty\bigcup_{\substack{1\leq i_l\leq m_l,\\1\leq l\leq k}}(T_{1,i_1}\circ\ldots\circ T_{k,i_k})(D).$$

\begin{lemma}\label{lemma1}
Let $D\subset\C$ be compact,
$(R_k)$ a sequence of iterated function schemes on $D$, with
$R_k=\{R_{k,1},\ldots,R_{k,m_k}\}$,
and
$h$ a gauge function satisfying~\eqref{1a}.
Let $X_k$ be the limit set of $R_k$. Suppose that, for all $k\in\N$,
\begin{enumerate}
\item[$(1)$] $R_{k,i}(D)\cap R_{k,j}(D)=\emptyset$ for $i\neq j$;
\item[$(2)$] for $1\leq j\leq m_k$ there exists $\tilde{b}_{k,j}\in (0,1)$ with 
$$
\sum_{j=1}^{m_k}\tilde{b}_{k,j}>1
\quad \text{and} \quad 
\abs{R_{k,j}(z)-R_{k,j}(w)}\geq\tilde{b}_{k,j}\abs{z-w}
\ \text{for}\ z,w\in D.
$$
\end{enumerate}
Then there exists an increasing sequence $(n_i)$ of positive integers 
such that if
$(T_k)$ is a sequence of iterated function schemes with 
\begin{equation}\label{formel2}
T_k\in\{R_1,\ldots,R_i\}
\text{ for all }k\leq n_i 
\end{equation}
and $X$ is its limit set, then $H_h(X)=\infty$.
\end{lemma}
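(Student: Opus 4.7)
I would apply the mass distribution principle (Lemma~\ref{th21}) to a Bernoulli-type measure on~$X$. Since $\sum_j\tilde b_{k,j}>1$ with $\tilde b_{k,j}\in(0,1)$, the continuous strictly decreasing function $s\mapsto\sum_j\tilde b_{k,j}^s$ attains the value $1$ at a unique $s_k>1$. Set $\sigma_i:=\min\{s_1,\dots,s_i\}$ and $\delta_i:=(\sigma_i-1)/2$; by~\eqref{1a} there is $\rho_i>0$ with $h(r)\ge r^{1+\delta_i}$ for $r\in(0,\rho_i]$. Let $\gamma_i:=\max_{k\le i,\,j}c_{k,j}<1$, where $c_{k,j}\in(0,1)$ is a Lipschitz constant for $R_{k,j}$, and $\Delta_i:=\min_{k\le i}\delta^{(k)}$ with $\delta^{(k)}:=\min_{a\ne b}\dist(R_{k,a}(D),R_{k,b}(D))>0$ (positive thanks to~(1) and compactness of~$D$).

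For any admissible sequence $(T_k)$ with $T_k=R_{\tau(k)}$ I would set
\[
\mu\bigl((T_{1,i_1}\circ\cdots\circ T_{k,i_k})(D)\cap X\bigr)=\prod_{\ell=1}^{k}\tilde b_{\tau(\ell),i_\ell}^{s_{\tau(\ell)}},
\]
which is consistent across levels because $\sum_j \tilde b_{\tau(\ell),j}^{s_{\tau(\ell)}}=1$, and extends to a Borel probability measure on~$X$. Iterating the expansion bound in~(2) shows that two level-$k$ cylinders first disagreeing at coordinate~$j$ lie at mutual distance at least $(\prod_{\ell<j}\tilde b_{\tau(\ell),i_\ell})\,\delta^{(\tau(j))}$. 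Letting $J=J(x,r)$ denote the smallest $j$ for which this expression is less than~$r$, the ball $D(x,r)\cap X$ is forced into the level-$(J-1)$ cylinder through~$x$; since $\tilde b_{\tau(\ell),i_\ell}<1$ and $s_{\tau(\ell)}\ge\min_{\ell<J}s_{\tau(\ell)}$,
\[
\mu(D(x,r))\le\Bigl(\prod_{\ell<J}\tilde b_{\tau(\ell),i_\ell}\Bigr)^{\min_{\ell<J}s_{\tau(\ell)}}\le\Bigl(\frac{r}{\delta^{(\tau(J))}}\Bigr)^{\min_{\ell<J}s_{\tau(\ell)}}.
\]

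The $n_i$ are then chosen inductively so that (a)~$\gamma_i^{n_i}\diam D\le\rho_{i+1}$, (b)~$\gamma_{i+1}^{n_{i+1}}<\gamma_i^{n_i}$, and (c)~$(\gamma_i^{n_i}\diam D)^{\delta_{i+1}}\le 2^{-i}\Delta_{i+2}^{\sigma_{i+1}}$. For any small $r>0$, pick the unique $i$ with $\gamma_{i+1}^{n_{i+1}}\diam D\le r<\gamma_i^{n_i}\diam D$. Since $\tilde b_{\tau(\ell),i_\ell}\le c_{\tau(\ell),i_\ell}\le\gamma_{i+1}$ for $\ell\le n_{i+1}$ and $\delta^{(\cdot)}\le\diam D$, the expression defining $J$ at $j=n_{i+1}+1$ is at most $\gamma_{i+1}^{n_{i+1}}\diam D\le r$, so $J\le n_{i+1}+1$; consequently $\tau(\ell)\le i+1$ for $\ell<J$ and $\tau(J)\le i+2$, yielding $\min_{\ell<J}s_{\tau(\ell)}\ge\sigma_{i+1}$ and $\delta^{(\tau(J))}\ge\Delta_{i+2}$. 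Combining the estimate above with $h(r)\ge r^{1+\delta_{i+1}}$ (which applies because $r<\gamma_i^{n_i}\diam D\le\rho_{i+1}$) and using $\sigma_{i+1}-1-\delta_{i+1}=\delta_{i+1}$,
\[
\frac{\mu(D(x,r))}{h(r)}\le\frac{r^{\delta_{i+1}}}{\Delta_{i+2}^{\sigma_{i+1}}}\le\frac{(\gamma_i^{n_i}\diam D)^{\delta_{i+1}}}{\Delta_{i+2}^{\sigma_{i+1}}}\le 2^{-i}\to 0
\]
as $r\to 0$ (and hence $i\to\infty$). Lemma~\ref{th21} then gives $H_h(X)=\infty$. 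The main obstacle is the joint control of the possibly slowly-decaying excess dimensions $\sigma_i-1$ against the rate at which $h(r)/r$ approaches its limit in~\eqref{1a}; this interaction is exactly what forces the inductive rather than closed-form choice of $(n_i)$.
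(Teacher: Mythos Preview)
Your proposal is correct and follows essentially the same approach as the paper: both arguments construct the Bernoulli-type measure $\mu$ on $X$ with cylinder masses $\prod_\ell \tilde b_{\tau(\ell),i_\ell}^{s_{\tau(\ell)}}$, use the separation condition~(1) and the expansion bounds~(2) to trap $D(x,r)\cap X$ in a single cylinder, and then choose $(n_i)$ inductively so that the mass distribution principle (Lemma~\ref{th21}) applies. The differences are purely in bookkeeping---the paper writes $h(t)=t^{1+\eps(t)}$ and proves the pointwise bound $\mu(D(x,r))\le r^{1+2\eps(r)}$, whereas you discretise via the thresholds $\rho_i$ and the exponents $\delta_i=(\sigma_i-1)/2$; the paper first locates the cylinder level $k$ and then the scale $i$ with $n_i\le k<n_{i+1}$, whereas you locate $i$ from the range of $r$ and then bound the level $J$---but the underlying mechanism is identical.
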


\begin{rem}
It is clear that the conclusion also holds for
any increasing sequence $(n_i')$ satisfying $n_i'\geq n_i$ 
for all~$i$.
We will use this fact later.
\end{rem}

\begin{proof}
Let $h(t)=t^{1+\eps(t)}$. By \eqref{1a} we have $\eps(t)\to 0$ as $t\to 0$. We may assume that $\eps$ is non-decreasing, since otherwise we can replace $\eps(t)$ by $\max_{0\leq s\leq t}\eps(s)$ and use \eqref{2j}. Similarly, we may assume that $\eps(t)\to 0$ as $t\to 0$ so slowly that with $g(t)=t^{\eps(t)}$ and hence $h(t)=tg(t)$ we have 
\begin{equation}\label{formel6}
g(t)\to 0\quad\text{ as }t\to 0.
\end{equation}

By condition (2), the solution $\tilde{s}_k$ of $\sum_j\tilde{b}_{k,j}^{\tilde{s}_k}=1$ satisfies $\tilde{s}_k>1$ for all $k$.
Let an increasing sequence $(n_i)$ be given. We will show
 that $(n_i)$ has the required properties if it tends to infinity sufficiently fast. Let $(T_k)$ be a sequence of iterated function schemes satisfying \eqref{formel2} and let $X$ be its limit set. If $T_k=R_m$, we set $b_{k,j}=\tilde{b}_{m,j}$ and $s_k=\tilde{s}_m$.

By a standard argument, it is possible to  
construct a probability measure $\mu$ supported on $X$ such that 
$$\mu((T_{1,j_1}\circ\ldots\circ T_{k,j_k})(D))=b_{1,j_1}^{s_1}\cdot\ldots\cdot b_{k,j_k}^{s_k}.$$ 
Note that since $\sum_j b_{k,j}^{s_k}=1$ for all $k$, we have $$\mu\left(\bigcup_{j_1,\ldots,j_k}(T_{1,j_1}\circ\ldots\circ T_{k,j_k})(D)\right)=1.$$ 

Assuming that $(n_i)$ tends to $\infty$ sufficiently fast, we will
show that if $x\in X$ and $r>0$ is sufficiently small, then there exist
$k\in\N$ and $p_1,\ldots,p_k$ such that 
\begin{equation}\label{formel5}
D(x,r)\cap X\subset(T_{1,p_1}\circ\ldots\circ T_{k,p_k})(D)
\end{equation}
and
\begin{equation}\label{formel5a}
\mu((T_{1,p_1}\circ\ldots\circ T_{k,p_k})(D))\leq r^{1+2\eps(r)}.
\end{equation}

Let $$d_k=\min_{l\leq k}\min_{j\neq j'}\text{ dist}(T_{l,j}(D),T_{l,j'}(D)).$$ 
Rescaling if necessary, we may assume that $d_k<1$ for all $k$.
Further, define 
$$\alpha_k=\min_{l\leq k}\min_j b_{l,j},\quad\beta_k=\max_{l\leq k}\max_j b_{l,j},
\quad
\gamma_k=\min_{l\leq k}s_l,\quad\delta_k=\max_{l\leq k}s_l.$$ 
It follows from these definitions that the quantities $\alpha_{n_i},\beta_{n_i},\gamma_{n_i}$ and $\delta_{n_i}$ depend only on $R_1,\ldots,R_i$ and not on the choice of the sequence $(n_i)$. So we can choose $(n_i)$ such that
\begin{equation}\label{formel3}
\gamma_{n_{i+1}}-\frac{\delta_{n_{i+1}}\log(\alpha_{n_{i+1}}d_{n_{i+1}})}{\log(d_{n_i}(\beta_{n_i})^{n_i})}\geq 1+2\eps(d_{n_i}(\beta_{n_i})^{n_i}).
\end{equation}
To see that this condition can be satisfied we note that $\gamma_{n_{i+1}}>1$, the denominator tends to $-\infty$ if $n_i\to\infty$ and the right side tends to 1 as $n_i\to\infty$. The latter statements follow since $(\beta_{n_i})^{n_i}$ decreases to 0 as $n_i\to\infty$ (note that $\beta_{n_i}<1$).

Now let $r>0$ be small and choose $k\in\N$ with $$d_{k+1}\prod_{l=1}^{k+1} b_{l,p_l}\leq r\leq d_k\prod_{l=1}^{k} b_{l,p_l},$$ where $p_1,\ldots,p_{k+1}$ are the uniquely determined positive integers such that $x\in(T_{1,p_1}\circ\ldots\circ T_{k+1,p_{k+1}})(D)$.

If $(q_1,\ldots,q_k)\neq(p_1,\ldots,p_k)$ and $y=(T_{1,q_1}\circ\ldots\circ T_{k,q_k})(w)$ for some $w\in D$, we obtain
 with $k_0=\min\{l\in\{1,\ldots,k\}\colon q_l\neq p_l\}$ that
$$\abs{y-x}\geq b_{1,p_1}\cdot\ldots\cdot b_{k_0-1,p_{k_0-1}}\cdot d_{k_0}\geq d_k\prod_{l=1}^k b_{l,p_l}\geq r,$$ so \eqref{formel5} holds.

Let $i$ be such that $n_i\leq k<n_{i+1}$. 
By the choice of $k$ we have
$r\leq d_{n_i}(\beta_{n_i})^{n_i}$. Since $\eps$ is a non-decreasing function, \eqref{formel3} implies that
$$\gamma_{n_{i+1}}-\frac{\delta_{n_{i+1}}\log(\alpha_{n_{i+1}}d_{n_{i+1}})}{\log r}\geq 1+2\eps(r).$$ Multiplying this by $\log r$ and taking exponentials, it follows that
$$r^{\gamma_{n_{i+1}}}\frac{1}{(\alpha_{n_{i+1}}d_{n_{i+1}})^{\delta_{n_{i+1}}}}\leq r^{1+2\eps(r)}.$$ By the definition of the quantities above, we have
$$r^{\gamma_{k+1}}\leq r^{\gamma_{n_{i+1}}},\quad d_{k+1}^{-\gamma_{k+1}}\leq d_{n_{i+1}}^{-\delta_{n_{i+1}}}\quad\text{and}\quad b_{k+1,p_{k+1}}^{-s_{k+1}}\leq\alpha_{n_{i+1}}^{-\delta_{n_{i+1}}}.$$ 
This implies that
$$
\left(\frac{r}{d_{k+1}}\right)^{\gamma_{k+1}}\cdot b_{k+1,p_{k+1}}^{-s_{k+1}}\leq r^{1+2\eps(r)}
$$
so that 
\begin{align*}
&\mu((T_{1,p_1}\circ\ldots\circ T_{k,p_k})(D))
=\prod_{l=1}^k b_{l,p_l}^{s_l}
=
 b_{k+1,p_{k+1}}^{-s_{k+1}}
\prod_{l=1}^{k+1} b_{l,p_l}^{s_l}
\\ &
\quad \leq
 b_{k+1,p_{k+1}}^{-s_{k+1}}
\left(\prod_{l=1}^{k+1}b_{l,p_l}\right)^{\gamma_{k+1}}
\leq
b_{k+1,p_{k+1}}^{-s_{k+1}}
\left(\frac{r}{d_{k+1}}\right)^{\gamma_{k+1}}
\leq r^{1+2\eps(r)},
\end{align*}
which is \eqref{formel5a}.

Using \eqref{formel5} and \eqref{formel5a} we obtain 
$\mu(D(x,r))\leq r^{1+2\eps(r)}$ and thus
$$\frac{\mu(D(x,r))}{h(r)}=\frac{\mu(D(x,r))}{r^{1+\eps(r)}}\leq r^{\eps(r)}=g(r)\to 0\text{ as }r\to 0$$ by \eqref{formel6}. 
As this holds for all $x\in X$, Lemma \ref{th21} implies that $H_h(X)=\infty$.
\end{proof}

The next lemma follows easily from the previous one.

\begin{lemma}\label{th23}
Let $h$ be a gauge function satisfying \eqref{1a}.
Let
\begin{equation}\label{defD}
D=\{z\in\C\colon 0\leq\Rea z\leq 1,\; 0\leq\Ima z\leq 1\}
\end{equation}
and let $(P_k)_{k\in\N}$ and $(Q_k)_{k\in\N}$ be sequences of
iterated function schemes on~$D$, with
$$P_k=\{P_{k,1},\ldots,P_{k,l_k}\}
\quad\text{and}\quad
Q_k=\{Q_{k,1},\ldots,Q_{k,m_k}\}.$$
Suppose that
\begin{enumerate}
\item[$(1)$]
$P_{k,i}(D)\cap P_{k,j}(D)=\emptyset$ for all
$k\in\N$ and $i\neq j$.
\item[$(2)$] For $k\in\N$,  $1\leq i\leq l_k$ and $1\leq j\leq m_k$, there exist 
$\tilde{b}_{k,i},\overline{b}_{k,j}\in(0,1)$ with
\begin{equation}\label{2a}
\sum_{i=1}^{l_k}\tilde{b}_{k,i}>1
\end{equation}
such that for $z,w\in D$, we have 
$\tilde{b}_{k,i}\abs{z-w}\leq\abs{P_{k,i}(z)-P_{k,i}(w)}$ and $\overline{b}_{k,j}\abs{z-w}\leq\abs{Q_{k,j}(z)-Q_{k,j}(w)}.$
\end{enumerate}
Then there exists an increasing sequence $(n_i)$ of positive integers 
such that with $n_0=0$ the limit set $X$ of
the sequence $(T_k)$ of iterated function schemes defined by
\begin{equation}\label{2b}
T_k=
\begin{cases}
P_i& \text{for } n_{i-1}+1\leq k\leq n_i-1,\\
Q_i& \text{for } k=n_i,
\end{cases}
\end{equation}
satisfies $H_h(X)=\infty$.
\end{lemma}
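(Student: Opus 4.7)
The plan is to reduce Lemma~\ref{th23} to Lemma~\ref{lemma1} by a bundling argument that absorbs each $Q$-step into the preceding block of $P$-steps. For each $k$ I introduce a new iterated function scheme $\hat R_k$ on $D$ whose branches are the compositions
\[
P_{k,j_1}\circ\cdots\circ P_{k,j_{M_k}}\circ Q_{k,1},\qquad (j_1,\ldots,j_{M_k})\in\{1,\ldots,l_k\}^{M_k},
\]
for an integer $M_k\geq1$ to be chosen. The branches of $\hat R_k$ are pairwise disjoint: the $P_{k,j}(D)$ are disjoint by hypothesis~(1), and the $P_{k,j}$ are injective because they admit the positive lower Lipschitz bound $\tilde b_{k,j}$, so if two index strings $\vec j\neq\vec j'$ first differ at position $m$, then the images lie in the disjoint sets $P_{k,j_1}(\cdots P_{k,j_m}(D)\cdots)$ and $P_{k,j_1}(\cdots P_{k,j'_m}(D)\cdots)$. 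Each branch has lower Lipschitz constant at least $\bar b_{k,1}\prod_l\tilde b_{k,j_l}$, and the sum of these constants is $\bar b_{k,1}\bigl(\sum_j\tilde b_{k,j}\bigr)^{M_k}$, which exceeds~$1$ for $M_k$ large enough by~\eqref{2a}. So $\hat R_k$ satisfies hypotheses~(1) and~(2) of Lemma~\ref{lemma1}.

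I next apply Lemma~\ref{lemma1} to the sequence $(\hat R_k)$ and further tune the $M_k$'s so that the sequence $(\tilde n_i)$ produced by Lemma~\ref{lemma1} may be taken to equal $(i)$, i.e.\ so that~\eqref{formel3} for $(\hat R_k)$ holds with $n_i=i$. A direct computation using $\alpha_k,\beta_k\sim\bar b_{k,1}\alpha_{P,k}^{M_k},\bar b_{k,1}\beta_{P,k}^{M_k}$ and $d_k\sim\alpha_{P,k}^{M_k-1}d_k^P$ (with $\alpha_{P,k}=\min_j\tilde b_{k,j}$, $\beta_{P,k}=\max_j\tilde b_{k,j}$, $d_k^P=\min_{j\neq j'}\dist(P_{k,j}(D),P_{k,j'}(D))$) shows that the ratio $|\log(\alpha_{i+1}d_{i+1})|/|\log(d_i\beta_i^i)|$ in~\eqref{formel3} scales like $M_{i+1}/(iM_i)$, while $\eps(d_i\beta_i^i)$ can be driven arbitrarily small by enlarging $M_i$, and the dimensions $s_k^{\hat R}$ (solving $\bar b_{k,1}^s(\sum_j\tilde b_{k,j}^s)^{M_k}=1$) tend to $s_k^P>1$ as $M_k\to\infty$. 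Choosing the $M_k$ to grow moderately with $k$ (polynomially suffices when $\liminf s_k^P>1$, and more aggressively in the degenerate regime $s_k^P\to1$) makes the LHS of~\eqref{formel3} exceed $1+2\eps(d_i\beta_i^i)$ with $\tilde n_i=i$.

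Having fixed the $M_k$, I set $n_i:=\sum_{k=1}^i(M_k+1)$ in the statement of Lemma~\ref{th23} (with $n_0=0$). The block structure of $(T_k)$ then matches the bundling: for any choice of $P$-branches in block~$i$ together with the fixed $Q$-branch $p_{n_i}=1$, the composition $T_{n_{i-1}+1}\circ\cdots\circ T_{n_i}$ is exactly a branch of $\hat R_i$. Consequently the subset $X'\subset X$ of limit points corresponding to choice sequences with $p_{n_i}=1$ for all $i$ coincides with the limit set of the bundled sequence $(\hat T_i):=(\hat R_i)$, which is admissible in Lemma~\ref{lemma1} by the previous paragraph. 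That lemma therefore yields $H_h(X')=\infty$, so $H_h(X)\geq H_h(X')=\infty$. The main obstacle is the parameter tuning of paragraph two, namely ensuring that a single choice of $M_k$ simultaneously meets the lower bound needed for condition~(2) and the growth constraint that keeps $\tilde n_i=i$ admissible in~\eqref{formel3}; this is delicate in the borderline regime where $\sum_j\tilde b_{k,j}\to 1^+$ and thus $s_k^P\to1$.
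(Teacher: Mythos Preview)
Your bundling idea---absorbing each $Q_k$ into a preceding block of $P_k$-maps to form composite schemes $\hat R_k$ satisfying the hypotheses of Lemma~\ref{lemma1}---is precisely the paper's reduction. The divergence is in your second paragraph: because you include \emph{only} the $\hat R_k$'s in the list fed to Lemma~\ref{lemma1}, the natural sequence $(\hat R_1,\hat R_2,\ldots)$ satisfies~\eqref{formel2} only if $\tilde n_i=i$, and you attempt to force this by tuning the $M_k$ and reopening the proof of Lemma~\ref{lemma1}. This can in fact be made rigorous by an inductive choice (given $M_1,\ldots,M_i$, the second term on the left of~\eqref{formel3} with $n_i=i$ tends to $+\infty$ as $M_{i+1}\to\infty$ while the right side stays fixed), but you do not carry it out: the scaling heuristics are imprecise, the polynomial-growth claim is unsubstantiated, and you yourself flag the regime $s_k^P\to 1$ as unresolved.

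The paper sidesteps this entirely by also keeping the $P_k$'s as separate schemes in the list. It sets $R_{2k-1}=P_k$ and $R_{2k}=(P_k)^{p_k}\circ Q_k$, with $p_k$ fixed once and for all just large enough for condition~(2), and applies Lemma~\ref{lemma1} to the full list $(R_j)$ as a black box to obtain some $(\tilde n_i)$. The target sequence~\eqref{2b} is then realised (after restricting to $Q_{k,1}$, as you also do) as
\[
\underbrace{R_1,\ldots,R_1}_{a_1},\;R_2,\;\underbrace{R_3,\ldots,R_3}_{a_2},\;R_4,\;\ldots,
\]
and one simply chooses the padding lengths $a_k$ large enough that the first appearance of each $R_j$ falls after position $\tilde n_{j-1}$. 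This verifies~\eqref{formel2} with no further constraint on the $p_k$ and no need to control $\tilde n_i$; the corresponding $(n_i)$ in~\eqref{2b} is $n_i-n_{i-1}=a_i+p_i+1$. Your ``main obstacle'' thus disappears once the $P_k$'s are retained as standalone schemes available for padding.
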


\begin{proof}
We have not assumed that $Q_k$ satisfies condition (1) of Lemmas~\ref{lemma1} and~\ref{th23}.
But we may do so since otherwise we could work with $Q_k=\{Q_{k,1}\}$ so that
$m_k=1$, as this would make $X$ smaller. In contrast,
$l_k\geq 2$ by~\eqref{2a}.

For $k\in\N$ we put $R_{2k-1}=P_k$ and $R_{2k}=(P_k)^{p_k}\circ Q_k$, where
$p_k\in\N$. Then the $R_{2k-1}$ satisfy the hypotheses of Lemma~\ref{lemma1}.
We claim that the $R_{2k}$ also satisfy these hypotheses if the 
$p_k$ are chosen large enough.
The conclusion then follows from Lemma~\ref{lemma1}.

Let $1\leq j\leq m_k$ and $q_1,\ldots,q_{p_k}\in\{1,\ldots,l_k\}$. Then
\begin{align*}
\abs{P_{k,q_{p_k}}\circ\ldots\circ P_{k,q_1}\circ Q_{k,j}(z)-P_{k,q_{p_k}}\circ\ldots\circ P_{k,q_1}\circ Q_{k,j}(w)}\\
\geq\overline{b}_{k,j}\prod_{n=1}^{p_k}\tilde{b}_{k,q_n}\abs{z-w}
=:c_{j,q_1,\ldots,q_{p_k}}\abs{z-w}.
\end{align*}
Since 
\begin{align*}
&\sum_{j=1}^{m_k}\sum_{q_1,\ldots,q_{p_k}\in\{1,\ldots,l_k\}}
c_{j,q_1,\ldots,q_{p_k}}
=\sum_{j=1}^{m_k}\sum_{q_1,\ldots,q_{p_k}\in\{1,\ldots,l_k\}}
\left(\overline{b}_{k,j}\prod_{n=1}^{p_k}\tilde{b}_{k,q_n}\right)\\
& \quad \geq m_k\min_j\overline{b}_{k,j}
\sum_{q_1,\ldots,q_{p_k}\in\{1,\ldots,l_k\}}\prod_{n=1}^{p_k}\tilde{b}_{k,q_n}
=m_k\min_j\overline{b}_{k,j}
\left(\sum_{i=1}^{l_k}\tilde{b}_{k,i}\right)^{p_k}>1
\end{align*}
by \eqref{2a} if $p_k$ is large enough, the above claim follows.  
\end{proof}

Note that 
Lemma \ref{th22} implies that dim $X>s$ whenever $\sum_{i=1}^m b_i^s>1$.
The converse is not true in general, but it holds for some iterate of $T$ if
the maps in $T$ are conformal on a domain containing~$D$.
Thus in this case condition~(2) in Lemmas \ref{lemma1} and \ref{th23}
is essentially equivalent to the statement
 that the corresponding iterated function schemes have limit sets of
dimension greater than~$1$.

We prove this result for completeness.

\begin{prop}
Let $D\subset\C$ be compact. Let $T_1,\ldots,T_m\colon D\to D$ be contractions that extend conformally to self-maps of some domain $B$ that 
contains $D$. 
Suppose 
that the limit set $X$ of the iterated function scheme $T=\{T_1,\ldots,T_m\}$ satisfies dim $X>s$. 
Then, for large $p\in\N$, the iterated function scheme $$S=T^p=\{T_{i_p}\circ\ldots\circ T_{i_1}\colon i_1,\ldots,i_p\in\{1,\ldots,m\}\}$$ 
has the property that, for $i=(i_1,\dots,i_p)\in \{1,\ldots,m\}^p$ and $S_i=T_{i_p}\circ\ldots\circ T_{i_1}$,
\begin{equation}\label{formel1}
\abs{S_i(z)-S_i(w)}\geq b_i\abs{z-w}
\ \text{for} \ z,w\in D,
\end{equation}
with constants $b_i\in(0,1)$ satisfying $\sum_i b_i^s>1$.
\end{prop}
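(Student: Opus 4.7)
The plan is to combine the bounded distortion property of conformal iterated function schemes with the upper-bound analogue of Lemma~\ref{th22}, namely Falconer's \cite[Proposition~9.6]{f}, and then close the gap between upper and lower Lipschitz constants by a super-multiplicativity argument.

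The key preliminary is that every composition $S_i=T_{i_p}\circ\ldots\circ T_{i_1}$ is univalent on the common domain $B$, independently of the length $p$. Applying Koebe's distortion theorem to $S_i$ on $B$, restricted to the compact set $D\subset B$, therefore gives a constant $K=K(D,B)\geq 1$, uniform in $i$ and $p$, with $|S_i'(z)|/|S_i'(w)|\leq K$ for all $z,w\in D$. A companion application of Koebe's growth theorem, or equivalently a normal family argument on the normalised maps $\zeta\mapsto(S_i(\zeta)-S_i(z_0))/S_i'(z_0)$, shows that for a fixed $z_0\in D$ the quantity $|S_i(z)-S_i(w)|$ is comparable to $|S_i'(z_0)|\,|z-w|$ on $D$, uniformly in $i$ and $p$; in particular, the best upper and lower Lipschitz constants $c_i$ and $b_i$ of $S_i$ on $D$ satisfy $b_i\geq\kappa c_i$ for some $\kappa\in(0,1]$ depending only on $D$ and $B$.

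Next, applying \cite[Proposition~9.6]{f} to the iterated scheme $T^p=\{S_i\}_{|i|=p}$ yields $\dim X\leq t_p$, where $t_p$ is defined by $\sum_{|i|=p}c_i^{t_p}=1$. Since $\dim X>s$ by hypothesis, $t_p>s$, so $Z_p:=\sum_{|i|=p}c_i^s>1$ for every $p\geq 1$. I would then upgrade this to $Z_p\to\infty$. By the chain rule and bounded distortion, writing $S_{jk}$ for the concatenated composition, $|S_{jk}'(z_0)|=|S_j'(S_k(z_0))|\,|S_k'(z_0)|\geq|S_j'(z_0)|\,|S_k'(z_0)|/K$, because $S_k(z_0)\in D$. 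Translating back via the comparison between $c_i$ and $|S_i'(z_0)|$ established above, one obtains a constant $C_0\geq 1$ with $c_{jk}\geq c_j c_k/C_0$ for compositions of arbitrary lengths, and hence $Z_{p+q}\geq Z_p Z_q/C_0^s$. The sequence $W_p:=C_0^s Z_p$ is therefore super-multiplicative with $W_1>1$, and Fekete's lemma gives $(\log W_p)/p\to\sup_p(\log W_p)/p\geq\log W_1>0$, so $Z_p\to\infty$.

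For $p$ sufficiently large this forces $Z_p>\kappa^{-s}$, and hence $\sum_{|i|=p}b_i^s\geq\kappa^s Z_p>1$, which is the required conclusion (the constraint $b_i\in(0,1)$ being automatic from $b_i\leq c_i<1$ and injectivity). The main technical obstacle is securing uniform bounded distortion for compositions of arbitrary length; this is possible precisely because each $S_i$ is univalent on the single fixed domain $B$, so Koebe's theorems apply to $S_i$ directly and the distortion constants $K$, $\kappa$, $C_0$ do not accumulate with $p$.
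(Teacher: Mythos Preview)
Your argument is correct, and the use of Koebe's distortion theorem to obtain a uniform comparison $b_i\geq\kappa c_i$ is exactly the key ingredient the paper uses as well. The difference lies in how you show that $\sum_{|i|=p}c_i^s$ (equivalently $\sum_{|i|=p}(\diam S_i(D))^s$) is eventually large. You invoke the upper-bound direction \cite[Proposition~9.6]{f} to get $Z_p>1$ for every $p$, and then run a super-multiplicativity/Fekete argument to upgrade this to $Z_p\to\infty$. The paper takes a shorter path: since $\dim X>s$ we have $H_{t^s}(X)=\infty$, and because the sets $\{S_i(D):|i|=p\}$ cover $X$ with mesh tending to $0$ (the $T_j$ being contractions), one obtains $\sum_{|i|=p}(\diam S_i(D))^s\to\infty$ directly from the definition of Hausdorff measure, with no need for Fekete or Proposition~9.6. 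Your route is perfectly valid and has a ``thermodynamic'' flavor, but the paper's argument is more elementary in that it bypasses the super-multiplicativity step entirely.
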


\begin{proof}
It is clear that $S=T^p$ and $T$ have the same limit set $X$ for all $p\in\N$. 
As the $T_k$ extend to conformal self-maps of $B$, the same is true 
for the $S_i$. 
Thus for all $i$ there exist some constant  $b_i\in(0,1)$ such
that~\eqref{formel1} holds. We define $b_i$ as the largest
number with this property.
It can be deduced from the Koebe distortion theorem (Lemma \ref{th31}) that there exists a constant $K$, depending only on the domain $B$, with 
$Kb_i\abs{z-w}\geq\abs{S_i(z)-S_i(w)}$
for all $z,w\in D$ and all $i$.
Because dim $X>s$ and the maps $S_i$ are contractions, we have $\lim_{p\to\infty}\sum_i (\diam S_i(D))^s=\infty$. So for $p$ large enough we obtain 
$$\sum_i b_i^s\geq\sum_i\sup_{z,w\in D}\frac{\abs{S_i(z)-S_i(w)}^s}{(K\abs{z-w})^s}\geq\sum_i\frac{(\diam S_i(D))^s}{(K\diam(D))^s}\to\infty,$$ which proves the proposition.
\end{proof}

The following simple lemma will be used in the proof of Theorem~\ref{thm1}.

\begin{lemma}\label{th24}
Let $h$ be a gauge function satisfying $h(2t)\leq K\, h(t)$ for some $K>0$ 
and all small $t$. Let $A\subset\R^n$ and $f\colon A\to \R^n$ be Lipschitz-continuous.
If $H_h(A)<\infty$, then $H_h(f(A))<\infty$.
\end{lemma}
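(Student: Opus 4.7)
The plan is to transport a $\delta$-cover of $A$ to a cover of $f(A)$ via $f$ and then use the doubling hypothesis on $h$ to control the cost.

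First I would let $L\geq 1$ be a Lipschitz constant for $f$ and fix $k\in\N$ with $2^k\geq L$. Iterating the hypothesis $h(2t)\leq K h(t)$ gives $h(2^k t)\leq K^k h(t)$ for all sufficiently small $t$, and since $h$ is increasing we therefore obtain $h(L t)\leq K^k h(t)$ for all sufficiently small $t$. This is the only place the doubling property is used, and it is the whole point of the hypothesis: without it the Hausdorff measure of a Lipschitz image is not generally controlled by that of the domain, because an arbitrary gauge function need not be compatible with the rescaling $t\mapsto Lt$.

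Next I would take any $\delta$-cover $(A_j)$ of $A$. Setting $B_j=f(A\cap A_j)$ we have $A=\bigcup_j (A\cap A_j)$, so $f(A)\subset\bigcup_j B_j$. By the Lipschitz estimate, $\diam B_j\leq L\,\diam A_j\leq L\delta$, so $(B_j)$ is an $L\delta$-cover of $f(A)$. Provided $\delta$ is small enough that $L\delta$ lies in the range where $h(Lt)\leq K^k h(t)$ holds, we get
\[
\sum_{j=1}^{\infty}h(\diam B_j)
\leq \sum_{j=1}^{\infty}h(L\,\diam A_j)
\leq K^k\sum_{j=1}^{\infty}h(\diam A_j).
\]
Taking the infimum over all $\delta$-covers of $A$ yields $H_h^{L\delta}(f(A))\leq K^k H_h^{\delta}(A)$.

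Finally, I would let $\delta\to 0$. Since $L\delta\to 0$ as well, this gives $H_h(f(A))\leq K^k H_h(A)<\infty$, which is the claim. There is no real obstacle here: the argument is routine once the doubling condition has been used to absorb the Lipschitz constant, and the only point worth noting is that the constant $K^k$ depends on $L$ (hence on $f$) but not on $A$.
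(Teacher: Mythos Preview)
Your proof is correct and follows essentially the same route as the paper: choose $k$ with $2^k\geq L$, iterate the doubling hypothesis to get $h(Lt)\leq K^k h(t)$, push a $\delta$-cover through $f$, and conclude $H_h(f(A))\leq K^k H_h(A)$. The only cosmetic difference is that you write $B_j=f(A\cap A_j)$ whereas the paper writes $f(B_j)$ directly; your version is in fact slightly more careful since $f$ is only defined on~$A$.
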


\begin{proof}
Let $L$ be the Lipschitz constant of $f$; that is,
$\abs{f(z)-f(w)}\leq L\abs{z-w}$ for $z,w\in A$.
Choosing $m\in \N$ with $L\leq 2^m$ and putting $C=K^m$ we have
$h(Lt)\leq C\, h(t)$ for small~$t$.
Let $\delta>0$ and let $(B_j)$ be a $\delta$-cover of $A$.
Then $(f(B_j))$ is an $L\delta$-cover of $f(A)$
and 
$$\sum_j h(\diam f(B_j))\leq \sum_j h(L \diam B_j)
\leq C \sum_j h(\diam B_j)$$
so that $H_h(f(A))\leq C\,H_h(A)$.
\end{proof}

It follows 
that if $f\colon A\to f(A)$ is a bilipschitz map,
then $H_h(A)<\infty$ if and only if $H_h(f(A))<\infty$.

\section{Preliminaries for the proof of Theorem~\ref{thm1}} \label{sec3}
For a function $f\in B$ we choose $R>\abs{f(0)}$ 
such that
$\text{sing}(f^{-1})\subset D(0,R)$ and
put $G=\{z\colon \abs{f(z)}>R\}$, $H=\{z\colon \Rea z>\log R\}$ and $U=\exp^{-1}(G)$.
Eremenko and Lyubich \cite[section 2]{el} showed that there exists a holomorphic,
$2\pi i$-periodic
function $F\colon U\to H$ with $\exp(F(z))=f(\exp(z))$ for all $z\in U$.
We call $F$ the \emph{logarithmic transform} of~$f$. Moreover, 
they
showed that for every connected component $V$ of~$U$, the map
$F\vert_V\colon V\to H$ is bijective and the inverse map $\phi\colon H\to V$ satisfies
\begin{equation}\label{3a}
\abs{\phi'(w)}\leq\frac{4\pi}{\Rea w-\log R}
\end{equation}
for every $w\in H$. In terms of $F$ we obtain
\begin{equation}\label{3b}
\abs{F'(z)}\geq\frac{1}{4\pi}(\Rea F(z)-\log R)
\end{equation}
for all $z\in U$. In the sequel, we will assume without loss of generality that $R=1$. Let
$$\xi=\inf\{\Rea z\colon z\in U\}=\log\inf\{\abs{\zeta}\colon \abs{f(\zeta)}=R\}.$$
As in \cite{bks} we consider the function $h\colon (\xi,\infty)\to (0,\infty)$,
\begin{equation}\label{3c}
h(x)=\max_{\Rea z=x}\Rea F(z),
\end{equation}
and choose $z_x\in U$ with $\Rea z_x=x$ such that $h(x)=\Rea F(z_x)$. The function $h$ 
is increasing and convex
and
thus differentiable except
possibly at a countable set $C$ of $x$-values,
and for $x\in(\xi,\infty)\setminus C$ we have
\begin{equation}\label{3d}
h'(x)=F'(z_x),
\end{equation}
cf. \cite[p.~2562]{t}. Recalling that $R=1$ and 
$h(x)=\Rea F(z_x)$ we deduce from \eqref{3b} and \eqref{3d} that
\begin{equation}\label{3e}
\frac{h'(x)}{h(x)}\geq\frac{1}{4\pi}.
\end{equation}
Integration yields $\log h(x)\geq (x-\xi)/(4\pi)$ and thus
\begin{equation}\label{3e1}
h(x)\geq 2x
\end{equation}
for large $x$.

The following result is known as the Koebe distortion theorem and the Koebe one quarter theorem.
\begin{lemma}\label{th31}
Let $g\colon D(a,r)\to\C$ be a univalent function, $0<\lambda<1$ and 
$z\in\overline{D(a,\lambda r)}\setminus \{a\}$. Then
$$\frac{1}{(1+\lambda)^2}\abs{g'(a)}\leq\frac{\abs{g(z)-g(a)}}{\abs{z-a}}\leq\frac{1}{(1-\lambda)^2}\abs{g'(a)}$$
and
$$\frac{1-\lambda}{(1+\lambda)^3}\abs{g'(a)}\leq\abs{g'(z)}\leq\frac{1+\lambda}{(1-\lambda)^3}\abs{g'(a)}.$$
Moreover,
$$g(D(a,r))\supset D(g(a),\tfrac{1}{4}\abs{g'(a)}r).$$
\end{lemma}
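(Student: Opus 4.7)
My plan is to reduce all three estimates to their standard formulations for normalized schlicht functions on the unit disk and then to sketch the classical derivation. First I would set
$$\tilde{g}(w)=\frac{g(a+rw)-g(a)}{r\,g'(a)},\qquad w\in\D.$$
Then $\tilde{g}$ is univalent on $\D$ with $\tilde{g}(0)=0$ and $\tilde{g}'(0)=1$, so $\tilde{g}$ lies in the class $S$ of normalized schlicht functions. Substituting $w=(z-a)/r$, the three inequalities of Lemma~\ref{th31} translate into the classical Koebe estimates
$$\frac{|w|}{(1+|w|)^2}\leq|\tilde{g}(w)|\leq\frac{|w|}{(1-|w|)^2},\qquad \frac{1-|w|}{(1+|w|)^3}\leq|\tilde{g}'(w)|\leq\frac{1+|w|}{(1-|w|)^3},$$
for $|w|\leq\lambda$, together with the one-quarter inclusion $\tilde{g}(\D)\supset D(0,1/4)$. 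Thus it suffices to prove these three standard facts for members of $S$.

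All three rest on Bieberbach's coefficient bound: if $\tilde{g}(w)=w+a_2w^2+a_3w^3+\cdots$ belongs to $S$, then $|a_2|\leq 2$. I would prove this by forming the odd square root $G(w)=\sqrt{\tilde{g}(w^2)}$, which is again univalent on $\D$, and applying the Area Theorem to its complement map $1/G(1/w)$ on $\{|w|>1\}$. The resulting area inequality $\sum_{n\geq 1}n|b_n|^2\leq 1$ for the Laurent coefficients, combined with the expansion $G(w)=w+\tfrac{1}{2}a_2w^3+\cdots$, yields $|a_2|\leq 2$.

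The central step is to propagate this bound to arbitrary points of $\D$ via disk automorphisms. For fixed $z_0\in\D$ and $\phi(w)=(w+z_0)/(1+\overline{z_0}w)$, consider
$$h(w)=\frac{\tilde{g}(\phi(w))-\tilde{g}(z_0)}{(1-|z_0|^2)\,\tilde{g}'(z_0)}.$$
A direct computation using $\phi'(0)=1-|z_0|^2$ and $\phi''(0)=-2\overline{z_0}(1-|z_0|^2)$ shows $h\in S$ with
$$\tfrac{1}{2}h''(0)=\tfrac{1}{2}(1-|z_0|^2)\frac{\tilde{g}''(z_0)}{\tilde{g}'(z_0)}-\overline{z_0}.$$
Applying $|h''(0)/2|\leq 2$ and then taking real parts after multiplication by $\overline{z_0}/|z_0|$ yields a two-sided differential inequality for $\partial_\rho\log|\tilde{g}'(\rho e^{i\theta})|$ along the radius through $z_0$. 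One integration in $\rho$ gives the bounds on $|\tilde{g}'|$; a second integration along the same ray then produces the bounds on $|\tilde{g}|$.

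For the quarter theorem, if $\tilde{g}\in S$ omits some $c\in\C\setminus\{0\}$, the map $F(w)=c\tilde{g}(w)/(c-\tilde{g}(w))$ is again in $S$ with second coefficient $a_2+1/c$; Bieberbach's bound applied to both $\tilde{g}$ and $F$ gives $|1/c|\leq|a_2+1/c|+|a_2|\leq 4$, so $|c|\geq 1/4$ and hence $\tilde{g}(\D)\supset D(0,1/4)$. Unscaling gives $g(D(a,r))\supset D(g(a),\tfrac{1}{4}|g'(a)|r)$. The main technical obstacle lies in the central step: extracting a clean real differential inequality for $\log|\tilde{g}'|$ from the complex bound on $h''(0)/2$ and integrating it twice to obtain the sharp two-sided distortion estimates; the rest is elementary algebra and the Area Theorem.
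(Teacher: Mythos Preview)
Your reduction to the normalized class $S$ via $\tilde g(w)=(g(a+rw)-g(a))/(r\,g'(a))$ is exactly what the paper does: immediately after stating the lemma, the paper simply remarks that Koebe's theorems are usually stated for $a=0$, $r=1$, $g(0)=0$, $g'(0)=1$ and that the general case follows easily from this normalization. The paper gives no further argument and treats the normalized Koebe distortion and one-quarter theorems as classical facts.

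Your proposal goes well beyond this by sketching the standard proof of the normalized statements (Area Theorem $\Rightarrow$ Bieberbach's $|a_2|\le 2$ $\Rightarrow$ the differential inequality for $\log|\tilde g'|$ via the disk-automorphism renormalization, integrated twice; and the omitted-value argument for the quarter theorem). This sketch is correct in outline, including your computation of $h''(0)/2$. So your approach is not different from the paper's---it is the same reduction, just supplemented with a proof of the cited classical result that the paper takes for granted.
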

Usually Koebe's theorems are stated only for the special case that $a=0$, 
$r=1$, $g(0)=0$ and $g'(0)=1$, but the above result follows easily from this special case.

For $a\in\C$ and $r>0$ we define the square
$$S(a,r)=\{z\in\C\colon \abs{\Rea(z-a)}<r,\; \abs{\Ima(z-a)}<r\}.$$
\begin{lemma}\label{th32}
For sufficiently large $x$ there exist $m\in\N$ and pairwise disjoint domains $W_1,W_2,\ldots,W_m\subset S(F(z_x),\frac{1}{4}\Rea F(z_x))$ satisfying
\begin{equation}\label{3f}
\sum_{j=1}^m\diam W_j \geq 10^{-5}\frac{h(x)^3}{h'(x)}
\end{equation}
such that $F^2$ is a univalent map from each $W_j$ onto $S(F(z_x),\frac{1}{4}\Rea F(z_x))$ whose inverse extends univalently to $S(F(z_x),\frac{1}{2}\Rea F(z_x))$.
\end{lemma}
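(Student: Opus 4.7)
The plan is to realize each $W_j$ as $\psi_j(S)$, where $\psi_j=\phi_{V_2}\circ\phi_{V_1}$ is a univalent inverse branch of $F^2$ on $S'$ assembled from two inverse branches $\phi_{V_i}\colon H\to V_i$ of $F$ associated with components $V_1,V_2$ of $U$ (as supplied by the Eremenko--Lyubich construction). A pair $(V_1,V_2)$ will be called \emph{admissible} when $\phi_{V_1}(S')\subset H$ (so the composition is defined and univalent on $S'$) and $\psi_j(S)\subset S$ (so $W_j\subset S$). The required $W_j$'s are those coming from admissible pairs.

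The Koebe distortion theorem (Lemma~\ref{th31}) applied to $\phi_{V_1}$ on the disk $D(F(z_x),h(x))\subset H$---which contains $S'\subset D(F(z_x),h(x)/\sqrt 2)$---yields bounded distortion on $S'$; in particular, $\phi_{V_1}(S')$ is a quasi-square around $z_1^\ast:=\phi_{V_1}(F(z_x))$ of diameter comparable to $|\phi_{V_1}'(F(z_x))|\cdot h(x)$, which is bounded by an absolute constant thanks to~\eqref{3a}. Hence $\phi_{V_1}(S')\subset H$ whenever $\Rea z_1^\ast$ exceeds a fixed constant. A second Koebe application to $\phi_{V_2}$ shows that $\psi_j(S')$ is a small Jordan region around $w:=\psi_j(F(z_x))=\phi_{V_2}(z_1^\ast)$, and admissibility of $(V_1,V_2)$ reduces, up to a controllable boundary layer, to requiring that $w$ be a preimage of $F(z_x)$ under $F^2$ lying well inside $S$.

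Koebe's $\tfrac14$-theorem applied to $\psi_j$ on $D(F(z_x),h(x)/4)\subset S$ then gives
\[
\diam W_j\;\geq\; \frac{h(x)}{8\,|F'(z_1^\ast)|\,|F'(w)|},
\]
so that the required inequality $\sum_j\diam W_j\geq 10^{-5}h(x)^3/h'(x)$ reduces to producing a weighted lower bound
\[
\sum_{(V_1,V_2)\text{ admissible}}\frac{1}{|F'(z_1^\ast)|\,|F'(w)|}\;\gtrsim\;\frac{h(x)^2}{h'(x)}.
\]

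This weighted sum estimate is the heart of the proof, and its sharp execution is the main obstacle. The available ingredients are the $2\pi i$-periodicity of $F$---which, for every admissible pair, supplies $\asymp h(x)/(2\pi)$ further admissible vertical translates of $V_2$ sharing the same value of $|F'(w)|$---the identity $|F'(z_x)|=h'(x)$ from~\eqref{3d}, and the Eremenko--Lyubich estimate~\eqref{3b} bounding $|F'|$ from below. For the count across distinct tracts I plan to apply Koebe's $\tfrac14$-theorem to each $\phi_{V_2}$ on the disk $D(z_1^\ast,\Rea z_1^\ast)\subset H$ to produce disjoint preimage disks of radius $\asymp\Rea z_1^\ast/|F'(w)|$ around the various $w$-values; a length-area packing argument inside a bounded enlargement of $S$ then delivers the required lower bound, with the generous slack in~\eqref{3e} absorbing the explicit factor $10^{-5}$.
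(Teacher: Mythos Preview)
Your framework matches the paper's: each $W_j$ is $\psi_j(S)$ for a composite inverse branch $\psi_j=\phi_{V_2}\circ\phi_{V_1}$, Koebe controls $\diam W_j$ in terms of $|F'(z_1^\ast)|$ and $|F'(w)|$, and the target reduces to a weighted sum over preimages. The problem is the final ``length--area packing argument.'' Packing disjoint Koebe disks of radius comparable to $\Rea z_1^\ast/|F'(w)|$ inside an enlargement of $S$ yields only an \emph{upper} bound on the sum of squared radii, by area comparison; it does not produce the \emph{lower} bound on $\sum 1/|F'(w)|$ that you need. You also leave unexplained how the factor $1/h'(x)$ enters: for a generic first branch $\phi_{V_1}$ there is no relation between $|F'(z_1^\ast)|$ and $h'(x)$, and the identity~\eqref{3d} is useful only for the specific choice $z_1^\ast=z_x$.

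The paper closes both points concretely. It fixes the first branch $\phi$ to be the one with $\phi(F(z_x))=z_x$, so that $|F'(z_1^\ast)|=h'(x)$ and $P_x:=\phi(S)$ has diameter comparable to $h(x)/h'(x)$. For the second stage it replaces packing by a \emph{connectivity} (arc-length) estimate: the points $v_{x,k}:=\phi(z_x+2\pi ik)$ all lie in the single tract $V\ni z_x$, consecutive ones satisfy $|v_{x,k+1}-v_{x,k}|\leq 4\pi\,d_{x,k}$ with $d_{x,k}=|\phi'(z_x+2\pi ik)|$ by Koebe, and since $\Rea v_{x,k}\to\infty$ as $|k|\to\infty$ while some $\Rea v_{x,k_0}\leq x<\tfrac12 h(x)$, the chain must traverse the strip $\{\tfrac34 h(x)\leq\Rea z\leq\tfrac54 h(x)\}$. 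Telescoping the real parts forces $\sum_k d_{x,k}\geq c\,h(x)$ for an absolute constant~$c$. Combined with the size of $P_x$ and the roughly $h(x)$ available vertical $2\pi i$-translates of each $V_{x,k}=\phi(P_x+2\pi ik)$, this yields~\eqref{3f}. Without this arc-length mechanism your outline does not deliver the required lower bound.
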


\begin{proof}
Let $\phi$ be the branch of $F^{-1}$ which maps $F(z_x)$ to $z_x$ and let
$$
P_x=\phi(S(F(z_x),\tfrac{1}{4}\Rea F(z_x))).
$$
It follows from Koebe's theorem that
$$
D(z_x,r_x)\subset P_x\subset D(z_x,R_x)
$$
where
\begin{equation}\label{3i}
r_x=\frac{1}{4}\abs{\phi'(F(z_x))}\cdot\frac{1}{4}\Rea F(z_x)=\frac{1}{16}\frac{\Rea F(z_x)}{\abs{F'(z_x)}}=\frac{1}{16}\frac{h(x)}{h'(x)}
\end{equation}
and, by \eqref{3e},
\begin{equation}\label{3j}
R_x=4\abs{\phi'(F(z_x))}\cdot\frac{1}{4}\Rea F(z_x)=16r_x=\frac{h(x)}{h'(x)}\leq 4\pi
\end{equation}
for $x\notin C$. In particular, $P_x\subset H=\{z\in\C\colon \Rea z>0\}$ for large~$x$. For $k\in\Z$ we put
$$ 
d_{x,k}=\abs{\phi'(z_x+2\pi ik)},
\  v_{x,k}=\phi(z_x+2\pi ik)
\ \  \text{and}\  \ V_{x,k}=\phi(P_x+2\pi ik).
$$
Then, since $R=1$,
\begin{equation}\label{3l}
d_{x,k}\leq\frac{4\pi}{\Rea z_x}=\frac{4\pi}{x}\leq\frac{1}{4\pi}
\end{equation}
for large $x$
by \eqref{3a}. Again by Koebe's one quarter and distortion theorems,
\begin{equation}\label{3m}
D(v_{x,k},t_{x,k})\subset V_{x,k}\subset D(v_{x,k},T_{x,k})
\end{equation}
where
\begin{equation}\label{3n}
t_{x,k}=\frac{1}{4}d_{x,k}r_x
\end{equation}
and
$$
T_{x,k}=2d_{x,k}R_x=128t_{x,k}.
$$
By \eqref{3j} and \eqref{3l} we have
\begin{equation}\label{3p}
T_{x,k}\leq 2\frac{4\pi}{x}4\pi=\frac{32\pi^2}{x}\leq 1
\end{equation}
for large~$x$. Koebe's distortion theorem and \eqref{3l} also yield that
\begin{equation}\label{3q}
\abs{v_{x,k+1}-v_{x,k}}\leq 4\pi d_{x,k}\leq\frac{16\pi^2}{x}\leq 1
\end{equation}
for large~$x$. It is easy to see that $\Rea v_{x,k}\to\infty$ as $k\to\infty$ or $k\to -\infty$. On the other hand, for large $x$ there also exists $k_0\in\Z$ such that
$$\Rea v_{x,k_0}\leq x\leq\frac{1}{2}\Rea F(z_x).$$
Let
$$k_1=\max\left\{k\in\Z\colon \Rea v_{x,k}\leq\tfrac{3}{4}h(x)+1\right\}$$
and
$$k_2=\min\left\{k\in\Z\colon k\geq k_1\text{ and }\Rea v_{x,k}>\tfrac{5}{4}h(x)-1\right\}.$$
For $k_1<k<k_2$ we then have $\frac{3}{4}h(x)+1\leq\Rea
v_{x,k}\leq\frac{5}{4}h(x)-1$ and thus
$$
V_{x,k}\subset\left\{z\in\C\colon \tfrac{3}{4}h(x)\leq\Rea z\leq\tfrac{5}{4}h(x)\right\}
$$
by \eqref{3m} and \eqref{3p}. Now
\begin{equation}\label{3s}
\begin{aligned}
&\frac{1}{2}h(x)-2\leq\Rea v_{x,k_2}-\Rea v_{x,k_1} 
%\\ &
=\sum_{k=k_1}^{k_2-1}\left(\Rea v_{x,k+1}-\Rea v_{x,k}\right)
\\ &
\quad 
\leq\sum_{k=k_1}^{k_2-1}\abs{v_{x,k+1}-v_{x,k}} 
%\\ &
\leq4\pi\sum_{k=k_1}^{k_2-1}d_{x,k}
\leq4\pi\sum_{k=k_1+1}^{k_2-1}d_{x,k}+1
\end{aligned}
\end{equation}
by \eqref{3l} and \eqref{3q}. Hence
\begin{equation}\label{3t}
\begin{aligned}
\sum_{k=k_1+1}^{k_2-1}\diam(V_{x,k})&\geq 2\sum_{k=k_1+1}^{k_2-1}t_{x,k}=\frac{1}{2}r_x\sum_{k=k_1+1}^{k_2-1}d_{x,k}\\
&\geq\frac{1}{8\pi}r_x\left(\frac{1}{2}h(x)-3\right)\geq 
\frac{h(x)^2}{10^{3}h'(x)}
\end{aligned}
\end{equation}
by \eqref{3i}, \eqref{3m}, \eqref{3n} and \eqref{3s}. We now put
$$N_x=\left\lfloor\frac{1}{2\pi}\left(\frac{1}{2}h(x)-3\right)\right\rfloor.$$
Then
\begin{equation}\label{3u}
N_x\geq\frac{1}{20}h(x)
\end{equation}
for large~$x$. For $k_1< k< k_2$ there exists 
$l_k\in\Z$ such that 
if
$l_k\leq l < l_k+N_x$, then
$$v_{x,k}+2\pi il\in S(F(z_x),\tfrac{1}{4}\Rea F(z_x)-1)$$
and hence
$$V_{x,k}+2\pi il\subset S(F(z_x),\tfrac{1}{4}\Rea F(z_x))$$
by \eqref{3m} and \eqref{3p}. We now put $m=(k_2-k_1-1)N_x$
and denote by $W_1,\ldots,W_m$ the collection of the sets $V_{x,k}+2\pi il$
where $k_1< k< k_2$ and $l_k\leq l< l_k+N_x$. We deduce from \eqref{3t}
and \eqref{3u} that
$$
\sum_{j=1}^m\diam W_j =N_x\sum_{k=k_1+1}^{k_2-1}\diam V_{x,k}
\geq \frac{h(x)}{20}\cdot \frac{h(x)^2}{10^{3}h'(x)}
\geq 10^{-5}\frac{h(x)^3}{h'(x)}.
$$
To prove that the inverse function of $F^2\colon W_j\to S(F(z_x),\frac{1}{4}\Rea F(z_x))$ extends univalently to $S(F(z_x),\frac{1}{2}\Rea F(z_x))$ we only 
note that the argument showing that
$P_x\subset H$ for large $x$ actually yields that 
\[
\phi(S(F(z_x),\tfrac{1}{2}\Rea F(z_x)))\subset H
\]
 for large~$x$.
\end{proof}
The following growth lemma for real functions is well known, but for completeness
we include the short proof.
Usually it is stated for differentiable functions, but it also holds
for absolutely continuous functions. Note that such functions are
differentiable almost everywhere.
\begin{lemma}\label{th33}
Let $g\colon [x_0,\infty)\to\R$ be an increasing, absolutely continuous function
satisfying $\lim_{x\to\infty}g(x)=\infty$. Let $\delta>0$. Then there exists a 
measurable subset $E$ of $[x_0,\infty)$ 
having finite measure such that
$g'(x)\leq g(x)^{1+\delta}\text{ for }x\notin E$.
\end{lemma}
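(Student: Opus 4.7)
The plan is to set $E$ essentially equal to the set where the conclusion fails, and bound its measure by comparison with a convergent integral. Concretely, I would first observe that since $g$ is increasing with $g(x)\to\infty$, there exists $x_1\geq x_0$ with $g(x_1)\geq 1$; the interval $[x_0,x_1]$ contributes only finite measure, so I can absorb it into $E$ at the end.

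Next, set
\[
E_1=\{x\in[x_1,\infty)\colon g'(x)>g(x)^{1+\delta}\},
\]
where $g'$ exists almost everywhere since $g$ is absolutely continuous. On $E_1$ we have $g'(x)/g(x)^{1+\delta}>1$, so
\[
|E_1|\leq\int_{E_1}\frac{g'(x)}{g(x)^{1+\delta}}\,dx\leq\int_{x_1}^{\infty}\frac{g'(x)}{g(x)^{1+\delta}}\,dx.
\]
The key computation is then to evaluate this last integral. Since $g$ is absolutely continuous and $g(x)\geq 1$ on $[x_1,\infty)$, the function $F(x)=-1/(\delta g(x)^\delta)$ is absolutely continuous on every compact subinterval of $[x_1,\infty)$, with $F'(x)=g'(x)/g(x)^{1+\delta}$ almost everywhere. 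By the fundamental theorem of calculus for absolutely continuous functions,
\[
\int_{x_1}^{T}\frac{g'(x)}{g(x)^{1+\delta}}\,dx=F(T)-F(x_1)=\frac{1}{\delta g(x_1)^\delta}-\frac{1}{\delta g(T)^\delta},
\]
and letting $T\to\infty$, using $g(T)\to\infty$, gives the bound $1/(\delta g(x_1)^\delta)<\infty$.

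Finally, I would take $E=[x_0,x_1]\cup E_1$, which is measurable and has measure at most $(x_1-x_0)+1/(\delta g(x_1)^\delta)<\infty$, and observe that for $x\notin E$ we have $x\in[x_1,\infty)\setminus E_1$, hence $g'(x)\leq g(x)^{1+\delta}$ (at least at the points where $g'(x)$ exists; at the exceptional null set we can either redefine $g'$ arbitrarily or absorb it into $E$). There is no real obstacle here: the only mildly delicate point is making sure the elementary change-of-variable / antiderivative argument is justified without differentiability of $g$ everywhere, which is handled cleanly by the absolute continuity hypothesis.
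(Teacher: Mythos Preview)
Your proof is correct and follows essentially the same approach as the paper: define $E$ as the set where the desired inequality fails, and bound its measure by comparison with the convergent integral $\int g'(x)/g(x)^{1+\delta}\,dx = 1/(\delta g(x_1)^\delta)$. You are slightly more careful than the paper in explicitly absorbing the initial interval $[x_0,x_1]$ and the null set where $g'$ is undefined, but the argument is the same.
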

\begin{proof}
Suppose that $E= 
\{x\geq x_0\colon  g'(x)> g(x)^{1+\delta}\geq 1\}\neq \emptyset$.
Then
$$\int_E dx\leq \int_E \frac{g'(x)}{g(x)^{1+\delta}} dx\leq
\int_{x_1}^\infty  \frac{g'(x)}{g(x)^{1+\delta}} dx= 
\frac{1}{\delta g(x_1)^{\delta}}\leq\frac{1}{\delta}<\infty,$$
where $x_1=\inf E$. 
\end{proof}
We apply Lemma \ref{th33} to the function $h$
defined by \eqref{3c}.
Given $\eps>0$, we deduce from \eqref{3f} that,
in the situation of Lemma \ref{th32},
$$\sum_{j=1}^m\diam W_j\geq h(x)^{2-\eps}$$
for $x$ outside some set of finite
measure. We summarize the above results as follows.

\begin{lemma}\label{th34}
There exist $x_0>0$ and a subset $E$ of $[x_0,\infty)$ of
finite measure with the following property: 
if $x\in[x_0,\infty)\setminus E$, then there exist $m\in\N$ and pairwise
disjoint sets 
$W_1,...,W_m\subset S(F(z_x),\tfrac{1}{4}\Rea F(z_x))$
satisfying
\begin{equation}\label{3v}
\sum_{j=1}^m\diam W_j \geq h(x)^{3/2}.
\end{equation}
such that
$F^2\colon W_j\to S(F(z_x),\tfrac{1}{4}\Rea F(z_x))$
is a conformal map whose inverse extends univalently to the
square $S(F(z_x),\frac{1}{2}\Rea F(z_x))$.
\end{lemma}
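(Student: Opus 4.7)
The plan is to extract Lemma \ref{th34} from Lemma \ref{th32} by controlling the ratio $h(x)^3/h'(x)$ via Lemma \ref{th33}. Observe that Lemma \ref{th32} already furnishes everything needed about the family $\{W_j\}$ except the diameter estimate itself: the disjointness, the containment in $S(F(z_x),\tfrac14\Rea F(z_x))$, the conformality of $F^2$ on each $W_j$, and the univalent extension of the inverse to $S(F(z_x),\tfrac12\Rea F(z_x))$ are all in hand. So the task reduces to upgrading the lower bound $10^{-5}h(x)^3/h'(x)$ in \eqref{3f} to $h(x)^{3/2}$.

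For this I would verify that the function $h$ of \eqref{3c} satisfies the hypotheses of Lemma \ref{th33}: convexity of $h$ implies local absolute continuity, $h$ is increasing by construction, and \eqref{3e1} gives $h(x)\to\infty$. Applying Lemma \ref{th33} with $\delta=\tfrac14$ then produces a measurable set $E_0\subset[x_0,\infty)$ of finite Lebesgue measure such that $h'(x)\leq h(x)^{5/4}$ for $x\in[x_0,\infty)\setminus E_0$. Substituting this into \eqref{3f} yields $\sum_j\diam W_j\geq 10^{-5}h(x)^{7/4}$ for such $x$, and since $h(x)^{1/4}$ eventually exceeds $10^5$ (again by \eqref{3e1}), the desired bound $h(x)^{3/2}$ follows once $x_0$ is chosen sufficiently large.

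Finally I would take $E$ to be the union of $E_0$, the countable set $C$ from section \ref{sec3} (which has measure zero and accounts for the points where $h$ is not differentiable, where the derivation leading to \eqref{3f} may fail), and a bounded initial interval absorbing the ``sufficiently large $x$'' clauses in Lemma \ref{th32} and in the estimate $10^{-5}h(x)^{1/4}\geq 1$; this union has finite measure. There is no substantive obstacle here: the quantitative gain over Lemma \ref{th32} lives entirely in Lemma \ref{th33}, and assembling the pieces is routine once one checks the mild regularity of $h$. The only choice that matters is the exponent $\delta<1/2$ in Lemma \ref{th33}, which guarantees a strictly positive gap $2-\delta-3/2>0$ into which the absolute constant $10^{-5}$ can be absorbed.
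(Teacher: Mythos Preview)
Your proposal is correct and follows essentially the same approach as the paper: apply Lemma~\ref{th33} to bound $h'(x)$ by a power of $h(x)$ off a set of finite measure, substitute into \eqref{3f}, and absorb the constant $10^{-5}$ into the resulting surplus exponent. The paper states this in one sentence (obtaining $\sum_j\diam W_j\geq h(x)^{2-\eps}$ for any $\eps>0$), while you make the choice $\delta=\tfrac14$ explicit and are more careful about verifying the hypotheses of Lemma~\ref{th33} and accounting for the countable set~$C$; these are welcome clarifications but not a different argument.
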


\section{Proof of Theorem~\ref{thm1}} \label{sec4}

By \eqref{1a} we have $h(t)=t^{1+\eps(t)}$ where
$\eps(t)\to 0$ as $t\to 0$. Similarly as at the beginning of the proof
of Lemma \ref{lemma1}, we can assume without loss of generality that $\eps$
is positive and non-decreasing. Let $F\colon U\to H$ be the logarithmic transform
of $f$ as defined in section~\ref{sec3}. 
The set 
corresponding to $\Esc(f,(p_n))$ is the set
$$Z=\{z\in U \colon \Rea F^n(z)\to\infty,
\Rea F^n(z)\leq \log p_n \text{ for large }n\}.$$
Note that $\exp Z\subset \Esc(f,(p_n))$.
We will show first that there exists a bounded subset $Y$ of $Z$
satisfying $H_h(Y)=\infty$. 
From this we will then deduce that
$H_h(\Esc(f,(p_n)))=\infty$.
Our main tool in the proof will be Lemma \ref{th23}, so we have to construct a sequence of iterated function schemes.

Let $x_0$ and $E$ be as in Lemma \ref{th34}.
Since $E$ has finite measure, there exists 
$M\geq x_0$ with $\text{meas}(E\cap[M,\infty))<\frac{1}{2}$.
So we can find a sequence $(x_k)$ with $x_1\geq M$, $x_1\notin E$ and
$$x_k\in[h(x_{k-1}),h(x_{k-1})+1]\setminus E\quad \text{for }k\geq 2.$$
It follows from \eqref{3e1}  that $x_k\to\infty$ if $x_1$ is chosen large enough.
Let
$$S_k=S(F(z_{x_k}),\tfrac{1}{4}\Rea F(z_{x_k}))
=S(F(z_{x_k}),\tfrac{1}{4} h(x_k)).$$
Let $W_{k,1},\ldots,W_{k,l_k}$ be the sets obtained from Lemma \ref{th34}. The maps
$$\tilde{P}_{k,j}=(F^2\vert_{W_{k,j}})^{-1}$$
define an iterated function scheme $\tilde{P}_k$ on $S_k$. By conjugating $\tilde{P}_{k,j}$ with the affine map $L_k$ that sends $S_k$ to
the square $D$ defined by~\eqref{defD}
we obtain an iterated function scheme $P_k=\{P_{k,1},\dots,P_{k,l_k}\}$ on~$D$. 
In other words, we define $P_{k,j}=L_k\circ \tilde{P}_{k,j}\circ L_k^{-1}$ 
with $L_k\colon S_k\to D$, 
$$L_k(z)=\frac{2}{h(x_k)}(z-F(z_{x_k}))+\frac12 +\frac{i}{2}.$$
Further, we have maps $\tilde{Q}_{k,j}\colon S_{k+1}\to V_{k,j}\subset S_k$ which are inverse branches of $F\vert_{V_{k,j}}$, for $1\leq j\leq m_k$. Setting
$$Q_{k,j}=L_k\circ\tilde{Q}_{k,j}\circ L_{k+1}^{-1}$$
defines an iterated function scheme $Q_k$ on~$D$.
It is obvious that condition~(1) from Lemma \ref{th23} holds.
It remains to verify condition~(2). Since all the maps
$\tilde{P}_{k,i}$ and $\tilde{Q}_{k,j}$ can be continued univalently to a square
with twice the side length of $S_k$ resp. $S_{k+1}$, the existence of 
positive numbers
$\tilde{b}_{k,i}$ and $\overline{b}_{k,j}$ 
with
$\tilde{b}_{k,i}\abs{z-w}\leq\abs{P_{k,i}(z)-P_{k,i}(w)}$ and $\overline{b}_{k,j}\abs{z-w}\leq\abs{Q_{k,j}(z)-Q_{k,j}(w)}$
follows immediately from the
Koebe distortion theorem (Lemma \ref{th31}). For the proof of \eqref{2a}, we
will use \eqref{3v}. First note that, again by the Koebe distortion
theorem, there exists an absolute constant $K>0$
with
$$K\tilde{b}_{k,i}\geq\frac{\abs{P_{k,i}(z)-P_{k,i}(w)}}{\abs{z-w}}\geq\tilde{b}_{k,i}.$$
Choosing $z_0,w_0\in D$ with
$$\abs{P_{k,i}(z_0)-P_{k,i}(w_0)}=\max_{z,w\in D}\abs{P_{k,i}(z)-P_{k,i}(w)}
=\frac{2\diam W_{k,i}}{h(x_k)},$$
we obtain
\begin{align*}
K\tilde{b}_{k,i}&\geq\sup_{z,w\in D}\frac{\abs{P_{k,i}(z)-P_{k,i}(w)}}{\abs{z-w}}\geq\frac{\abs{P_{k,i}(z_0)-P_{k,i}(w_0)}}{\abs{z_0-w_0}}\\
&\geq\frac{\abs{P_{k,i}(z_0)-P_{k,i}(w_0)}}{\sup_{z,w\in D}\abs{z-w}}
=\sqrt{2}\frac{\diam W_{k,i}}{h(x_k)},
\end{align*}
so
$$\tilde{b}_{k,i}\geq C\frac{\diam W_{k,i}}{h(x_k)}$$
for some absolute constant~$C$. Using \eqref{3v}, we obtain
$$\sum_{i=1}^{l_k}\tilde{b}_{k,i}\geq 
C\frac{\sum_{i=1}^{l_k}\diam W_{k,i}}{h(x_k)}\geq C\, h(x_k)^{1/2}>1$$
if $x_1$ was chosen large enough at the beginning. 
By Lemma \ref{th23}, there exists an increasing sequence $(n_i)$ such that
the limit set $X$ of $(T_k)$, where $T_k$ is defined as in \eqref{2b},
satisfies $H_h(X)=\infty$. 

Put $Y=L_1^{-1}(X)$.
By increasing $(n_i)$ if necessary, we can
achieve that $\Rea F^k(z)\leq\log p_k$ if $z\in Y$ and $k$
is large enough. 
We also have $\Rea F^n(z)\to \infty$ as $n\to\infty$ for $z\in Y$.
To see this, 
let $z=L_1^{-1}(w)$, where $w\in X$. 
Let $k\in\N$ and put $i_k=\max\{i\colon n_i\leq k\}$. Then
$$(T_{k,j_k}^{-1}\circ\ldots\circ T_{1,j_1}^{-1})(w)=L_{i_k+1}(F^{2k-i_k}(L_1^{-1}(w)))=L_{i_k+1}(F^{2k-i_k}(z)).$$
Since $i_k\to\infty$ and $\max_{z\in S_k}\Rea z\to\infty$ as
$k\to\infty$, 
we can deduce from this
that $\Rea F^{2k-i_k}(z)\to\infty$ as $k\to\infty$,
from which we can easily deduce that $\Rea F^n(z)\to \infty$ as $n\to\infty$.
Altogether we thus have $Y\subset Z$.

Since $H_h(X)=\infty$ and infinite $H_h$-measure is invariant
under affine mappings for any gauge function~$h$, we have $H_h(Y)=\infty$. 
In order to deduce that $H_h(\Esc(f,(p_n)))=\infty$ we use
Lemma~\ref{th24}.
Recall that 
\begin{equation}\label{4b1}
\exp Y\subset \exp Z\subset \Esc(f,(p_n)).
\end{equation}
Since $Y$ is bounded and $H_h(Y)=\infty$, there exists $y_0\in\R$ with
\begin{equation}\label{4b2}
H_h(Y\cap\{z\colon \Ima z\in(y_0,y_0+\pi)\})=\infty.
\end{equation}
Noting that $\eps$ is non-decreasing, we also see that
$$h(2t)=2 t\cdot  (2t)^{\eps(2t)}\leq 3t\cdot t^{\eps(2t)}
\leq 3t\cdot t^{\eps(t)}=3 h(t)$$
for small~$t$.
Since $\exp$ restricted to $Y\cap\{z\colon \Ima  z\in(y_0,y_0+\pi)\}$
is a bilipschitz mapping, Lemma~\ref{th24} and \eqref{4b2} yield
$$H_h(\exp(Y\cap\{z\colon \Ima  z\in(y_0,y_0+\pi)\}))=\infty.$$
An application of \eqref{4b1} finishes the proof.\qed
\begin{rem}
The result of~\cite{bkz} yields a sequence $(S_k)$ of squares tending 
to $\infty$ and associated iterated function schemes $\tilde{P}_k$ 
as in the above proof. Lemma~\ref{th34} yields 
additional information about the
``density'' of such squares. However, this is not essential for
the argument, since otherwise we could replace the $\tilde{Q}_{k,j}$ by 
inverse branches of some iterate of~$F$.
\end{rem}

\section{Preliminaries for the proof of Theorem~\ref{thm2}} \label{sec5}
The function $f$ will have the property that it is bounded outside a
narrow strip. There is a well-established technique to construct such
functions using contour integrals, 
cf. \cite[Chapter III, Problems 158-160]{PolyaSzegoe},
 \cite{rrrs}, \cite{s4} and, in particular, \cite{Rempe}.
In order to apply this method
we need some estimates concerning conformal mappings of strips.
Let $\Omega$ be a domain of the form 
$\Omega=\{x+iy\colon  |y|<\phi(x)\}$ with some non-negative function
$\phi\colon \R\to\R$ and let 
$w\colon \Omega\to \{x+iy\colon  |y|<\pi\}$
be a conformal map satisfying $w(x)\to\pm\infty$
as $x\to\pm\infty$. Put
\[
\overline{H}(x)=\sup_{|y|<\phi(x)}\Rea w(x+iy)
\quad\text{and}\quad
\underline{H}(x)=\inf_{|y|<\phi(x)}\Rea w(x+iy).
\]
The celebrated Ahlfors distortion theorem~\cite[\S 2]{Ahlfors30},
specialized to strips of the above form, says that 
\[
\underline{H}(x_2)-\overline{H}(x_1)\geq 
\pi\int_{x_1}^{x_2} \frac{dx}{\phi(x)} -8\pi
\quad\text{if}\ 
\int_{x_1}^{x_2} \frac{dx}{\phi(x)}>4.
\]
We mention that Ahlfors denoted the ``width'' of a cross section 
(of more general strips) by $\theta(x)$. 
In our setting we have $\theta(x)=2\phi(x)$.

Ahlfors~\cite[\S 3]{Ahlfors30} also proved an inequality 
in the opposite direction, provided that $\phi$ satisfies some
regularity conditions. (It is easy to see that some
additional hypotheses are necessary for such estimates.)

Suppose that $\phi$ is bounded, continuous and of bounded variation 
on every finite interval. 
Following Ahlfors we
denote by $\phi_m(x_1,x_2)$ the minimum of $\phi$ 
and by $V(x_1,x_2)$ the total variation of $\phi^2$
in the interval $[x_1,x_2]$.
Noting that $w$ extends continuously to $\partial G$, with
$\partial G$ being mapped bijectively onto $\{x+iy\colon |y|=\pi\}$, we put
\[
\overline{x} =\Rea w^{-1} (\overline{H}(x)+i\pi)
\quad\text{and}\quad
\underline{x} =\Rea w^{-1} (\underline{H}(x)+i\pi).
\]
With
$L=\sup_{x\in\R} \phi(x)$
Ahlfors's result~\cite[p.~15]{Ahlfors30} then takes the form
\[
\overline{H}(x_2)-\underline{H}(x_1)\leq 
\pi\int_{\underline{x}_1}^{\overline{x}_2} \frac{dx}{\phi(x)} +8\pi
L^2 \frac{\phi_m(\underline{x}_1,\overline{x}_2)^2 + V(\underline{x}_1,\overline{x}_2)}{\phi_m(\underline{x}_1,\overline{x}_2)^4}.
\]
Suppose in addition that $\phi$ is decreasing. Then this simplifies to 
\begin{equation}\label{5new1}
\begin{aligned}
\overline{H}(x_2)-\underline{H}(x_1)
&\leq
\pi\int_{\underline{x}_1}^{\overline{x}_2} \frac{dx}{\phi(x)} +8\pi
L^2\frac{\phi(\underline{x}_1)^2}{\phi(\overline{x}_2)^4} 
\\ &
\leq
\pi\int_{\underline{x}_1}^{\overline{x}_2} \frac{dx}{\phi(x)} +
\frac{8\pi L^4}{\phi(\overline{x}_2)^4}.
\end{aligned}
\end{equation}
Ahlfors~\cite[p.~15]{Ahlfors30} also showed that 
\[
\int_{\underline{x}_1}^{x_1} \frac{dx}{\phi(x)} \leq 8
\quad\text{and}\quad
\int_{x_2}^{\overline{x}_2} \frac{dx}{\phi(x)} \leq 8.
\]
This implies that $8\geq (\overline{x}_2-x_2)/\phi(x_2)$ and hence
\begin{equation}\label{5new2}
\overline{x}_2\leq x_2+8 \phi(x_2).
\end{equation}

We will 
also assume that
$\phi(x)\leq 1/x$ for large~$x$.
Assuming that $\underline{x}_1\geq 0$  we can now deduce from~\eqref{5new1} 
and~\eqref{5new2} that
\[
\overline{H}(x_2)-\underline{H}(x_1)
\leq \pi \frac{\overline{x}_2}{\phi(\overline{x}_2)}
+\frac{8\pi L^4}{\phi(\overline{x}_2)^4}
\leq \frac{9\pi L^4}{\phi(\overline{x}_2)^4}
\leq \frac{9\pi L^4}{\phi(x_2+8 \phi(x_2))^4}
\]
for large~$x_2$. For us  only the case where $x_1$ is fixed and $x=x_2\to\infty$
is of interest. 
We obtain the following result.
\begin{lemma}\label{ahlf1}
Let $\phi\colon \R\to\R$ be a positive, bounded, continuous and
 decreasing function satisfying $\phi(x)\leq 1/x$ for large~$x$.
Let 
\[
w\colon \{x+iy\colon  |y|<\phi(x)\}\to \{x+iy\colon  |y|<\pi\}
\]
be a conformal map satisfying $w(x)\to\pm\infty$
as $x\to\pm\infty$. Then there exists a constant $C$ such that
\[
\sup_{|y|<\phi(x)}\Rea w(x+iy)\leq \frac{C}{\phi(x+8 \phi(x))^4}
\]
for all large $x$.
\end{lemma}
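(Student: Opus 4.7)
The plan is to package together the estimates assembled in the paragraph preceding the lemma. The essential tool is Ahlfors's upper estimate~\eqref{5new1}, applied with a fixed $x_1$ and with $x_2=x\to\infty$; the decay and monotonicity hypotheses on $\phi$ then allow every auxiliary quantity to be absorbed into the single factor $\phi(x+8\phi(x))^{-4}$.

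First, I would choose $x_1$ large enough that three conditions hold simultaneously: $\underline{x}_1\geq 0$, the bound $\phi(u)\leq 1/u$ holds for all $u\geq x_1$, and $\int_{x_1}^{x_2}du/\phi(u)>4$ for all large $x_2$. The last condition is automatic, since $\phi(x)\leq 1/x$ forces $\phi\to 0$ and hence $1/\phi$ is integrally divergent. For such $x_1$ and $x=x_2$ large, inequality~\eqref{5new1} applies; using that $\phi$ is decreasing and $\underline{x}_1\geq 0$ to bound the integral by $\overline{x}/\phi(\overline{x})$, I obtain
\[
\overline{H}(x)\leq \underline{H}(x_1)+\pi\frac{\overline{x}}{\phi(\overline{x})}+\frac{8\pi L^4}{\phi(\overline{x})^4}.
\]

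Next, the hypothesis $\phi(\overline{x})\leq 1/\overline{x}$ gives $\overline{x}/\phi(\overline{x})\leq 1/\phi(\overline{x})^2$, and since $\phi\to 0$ we have $\phi(\overline{x})\leq 1$ eventually, so $1/\phi(\overline{x})^2\leq 1/\phi(\overline{x})^4$. The constant $\underline{H}(x_1)$ is fixed while the right-hand side tends to $\infty$, so it can be absorbed as well, yielding $\overline{H}(x)\leq C_0/\phi(\overline{x})^4$ for some constant $C_0$ and all sufficiently large~$x$. Finally, the geometric estimate~\eqref{5new2} says $\overline{x}\leq x+8\phi(x)$, and monotonicity of $\phi$ then gives $\phi(\overline{x})\geq \phi(x+8\phi(x))$, whence
\[
\sup_{|y|<\phi(x)}\Rea w(x+iy)=\overline{H}(x)\leq \frac{C}{\phi(x+8\phi(x))^4}
\]
with $C=C_0$.

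I do not foresee any genuine obstacle here: all of the needed manipulations already appear in the paragraph preceding the lemma statement, and the only care required is in choosing $x_1$ large enough so that the three simultaneous applicability conditions above all hold, which is guaranteed by the combination of boundedness, monotonicity, and the decay bound $\phi(x)\leq 1/x$.
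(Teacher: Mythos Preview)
Your proposal is correct and follows essentially the same route as the paper: both arguments apply the Ahlfors upper estimate~\eqref{5new1} with a fixed $x_1$ (chosen so that $\underline{x}_1\geq 0$), bound the integral by $\overline{x}/\phi(\overline{x})$ via monotonicity, use $\phi(x)\leq 1/x$ to absorb this into $1/\phi(\overline{x})^4$, and then invoke~\eqref{5new2} together with the monotonicity of $\phi$ to pass from $\overline{x}$ to $x+8\phi(x)$. The only cosmetic difference is that the paper collects the constants into the explicit bound $9\pi L^4/\phi(\overline{x}_2)^4$ before absorbing $\underline{H}(x_1)$, whereas you absorb everything into an unspecified $C_0$; this is immaterial.
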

Let $\phi$ be as above and put
\begin{equation}\label{5new3}
S=\{x+iy\colon  x>0, |y|<\phi(x)\}.
\end{equation}
The method of contour integrals described in the papers  mentioned above consists of 
defining a function $f$ by 
\[
f(z)=\frac{1}{2\pi i}\int_{\partial S}
\frac{\exp\left(e^{w(\zeta)}\right)}{\zeta-z}d\zeta
\]
for $z\in\C\backslash\overline{S}$ and analytic continuation
of $f$ to the whole plane by deforming the path of integration.
The results of Rempe~\cite[Theorem~1.7]{Rempe} imply the following lemma.
\begin{lemma}\label{remp1}
Let $\phi$ and $w$ be as in Lemma~\ref{ahlf1} and define $S$ by~\eqref{5new3}.
Then there exists $f\in B$ satisfying
\[
f(z)=\begin{cases}
\exp\left(e^{w(z)}\right)+O(1/z)&\text{for }z\in S,\\
O(1/z)&\text{for }z\notin S .
\end{cases}
\]
\end{lemma}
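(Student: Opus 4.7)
The approach is the Cauchy integral construction outlined in the paragraph preceding the lemma: with $\partial S$ oriented clockwise (so that $S$ lies on the right), define
\[
f(z)=\frac{1}{2\pi i}\int_{\partial S}\frac{\exp(e^{w(\zeta)})}{\zeta-z}\,d\zeta
\quad\text{for }z\notin\overline{S},
\]
and analytically continue $f$ across $\partial S$ by contour deformation. Equivalently, the entire function $f$ is given on $S$ by $f(z)=\exp(e^{w(z)})+G(z)$, where $G$ denotes the same Cauchy integral: the Sokhotski--Plemelj jump relation (with this orientation, $G^-(z_0)-G^+(z_0)=\exp(e^{w(z_0)})$) makes the two definitions agree on $\partial S$.

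Before any estimates I would verify absolute convergence. On $\partial S$ one has $w(\zeta)=u\pm i\pi$, so $\exp(e^{w(\zeta)})=\exp(-e^{u})$, which is bounded by $1$ and, by the lower bound in Ahlfors' distortion theorem (which forces $\Rea w(\zeta)\to\infty$ along $\partial S$), decays superexponentially as $\Rea\zeta\to\infty$. Combined with the locally bounded arc length of $\partial S$ this yields $\int_{\partial S}|\exp(e^{w(\zeta)})|\,|d\zeta|<\infty$ and makes $f$ holomorphic on $\C\setminus\overline{S}$.

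The two asymptotic estimates both reduce to showing $G(z)=O(1/z)$ uniformly as $|z|\to\infty$. I would split $\partial S=(\partial S\cap D(z,|z|/2))\cup(\partial S\setminus D(z,|z|/2))$: on the far piece $|\zeta-z|\geq|z|/2$, and absolute integrability yields an $O(1/z)$ contribution. On the near piece, for large $|z|$ every $\zeta$ involved has $\Rea\zeta$ comparable to $|z|$, so the Ahlfors lower bound (combined with the hypothesis $\phi(x)\leq 1/x$, which makes $\int dx/\phi(x)$ grow at least quadratically) forces $\Rea w(\zeta)$ to be enormous; then $\exp(-e^{\Rea w(\zeta)})$ is doubly-exponentially small and easily dominates the (possibly singular) factor $1/|\zeta-z|$ even when integrated over a piece of $\partial S$ of length at most $|z|$.

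Finally, for $f\in B$ I would appeal directly to \cite[Theorem~1.7]{Rempe}, whose hypotheses are satisfied by our setup of a strip tract $S$, the standard conformal map $w$, and the inner model $\exp(e^w)$; that theorem produces an entire function of class $B$ with exactly the asymptotic behaviour stated in the lemma. The main obstacle along the way is the uniform $O(1/z)$ bound near $\partial S$ with large modulus, where the naive estimate via $1/|\zeta-z|$ diverges and one must crucially exploit the super-fast decay of $\exp(e^{w(\zeta)})$ guaranteed by the Ahlfors distortion theorem.
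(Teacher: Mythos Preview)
Your approach is the same as the paper's: the paper offers no self-contained proof of this lemma but simply states (just before the lemma) that it follows from \cite[Theorem~1.7]{Rempe}, after the same informal description of the Cauchy integral construction that you give. Your sketch of the $O(1/z)$ estimate goes beyond what the paper writes down, and your final appeal to Rempe's theorem for both the asymptotics and the class-$B$ conclusion is exactly how the paper proceeds.
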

It follows from Lemma~\ref{ahlf1}  that the function $f$ 
in Lemma~\ref{remp1} satisfies
\begin{equation}\label{5new4}
|f(z)|\leq \exp \left( \exp \left(
\frac{C}{\phi(|z|+8 \phi(|z|))^4}\right)\right)
\end{equation}
for some $C>0$ if $|z|$ is large.
Replacing $f$ by $\eps f$ with a small constant $\eps$ we may 
assume in addition that
\begin{equation}\label{5i}
\text{sing}\left(f^{-1}\right)\subset\left\{z\in\C\colon \abs{z}<\tfrac{1}{2}\right\},
\end{equation}
\begin{equation}\label{5j}
\abs{f(z)}\leq\tfrac{1}{2}\text{ for }\abs{z}\leq 1
\end{equation}
and
\begin{equation}\label{5k}
\abs{f(z)}\leq 1\text{ for }z\notin S.
\end{equation}
It is apparent from the above discussion that the behavior
of $\phi(x)$ as $x\to-\infty$ is irrelevant for our purposes.
In fact, it suffices to define the function $\phi$ on an
interval $[x_0,\infty)$, as it can be continued to $\R$
by setting $\phi(x)=\phi(x_0)$ for $x<x_0$.
We shall use the following result to define the function~$\phi$.
\begin{lemma}\label{th53}
Let $\alpha\colon [x_0,\infty)\to(0,\infty)$ be decreasing and
continuous and let $\beta\colon (0,1]\to (0,\infty)$ be increasing and continuous.
Then there exists a decreasing, continuous function 
$\phi\colon [x_0,\infty)\to(0,1]$ satisfying 
$\phi(x+\alpha(x))\leq\beta(\phi(x))$
for large~$x$.
\end{lemma}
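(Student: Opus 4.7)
The plan is to reduce the problem to constructing a single ``template'' decreasing function $F$ of one real variable and then pull it back via a change of variable that turns the step $x\mapsto x+\alpha(x)$ into the unit step $t\mapsto t+1$. As a preliminary reduction I would replace $\beta$ by $\beta^{\ast}(t):=\min\bigl(\beta(t),t/2\bigr)$, which is still continuous and strictly increasing, satisfies $\beta^{\ast}\leq\beta$, and is strictly contracting in the sense that $\beta^{\ast}(t)\leq t/2$. Since it suffices to construct $\phi$ with $\phi(x+\alpha(x))\leq \beta^{\ast}(\phi(x))$, I assume henceforth that $\beta$ itself satisfies $\beta(t)\leq t/2$ on $(0,1]$.

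Next set $\Lambda(x):=\int_{x_{0}}^{x}\tfrac{dt}{\alpha(t)}$. Since $\alpha$ is positive, continuous, and decreasing, $\Lambda$ is a continuous strictly increasing bijection $[x_{0},\infty)\to[0,\infty)$, and monotonicity of $\alpha$ yields
\[
\Lambda\bigl(x+\alpha(x)\bigr)-\Lambda(x)=\int_{x}^{x+\alpha(x)}\frac{dt}{\alpha(t)}\geq\frac{\alpha(x)}{\alpha(x)}=1.
\]

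I would then build $F\colon[0,\infty)\to(0,1]$ by taking $F$ linear on $[0,1]$ with $F(0)=1$ and $F(1)=\beta(1)$ and extending by the recursion $F(t):=\beta\bigl(F(t-1)\bigr)$ for $t\geq 1$. A routine induction on $n$, working on the interval $[n,n+1]$, shows that $F$ is continuous at each integer (since $F(n^{-})=\beta(F(n-1))=F(n^{+})$), strictly decreasing (as $\beta$ is strictly increasing), positive, and bounded above by $2^{-n}$ at $t=n$, so $F(t)\to 0$; by construction $F(t+1)=\beta(F(t))$ for every $t\geq 0$.

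Finally set $\phi(x):=F(\Lambda(x))$, which is a continuous strictly decreasing map $[x_{0},\infty)\to(0,1]$. The monotonicity of $F$ together with the displayed lower bound gives
\[
\phi\bigl(x+\alpha(x)\bigr)=F\bigl(\Lambda(x+\alpha(x))\bigr)\leq F\bigl(\Lambda(x)+1\bigr)=\beta\bigl(F(\Lambda(x))\bigr)=\beta(\phi(x)),
\]
which is the desired inequality (and in fact holds for every $x\geq x_{0}$, not just for large $x$). The main subtlety is the initial reduction: the hypothesis does not force $\beta$ to be a contraction on $(0,1]$, which is precisely why one first passes to $\beta^{\ast}$; once that reduction is in place, the construction of the template $F$ and the pull-back by $\Lambda$ become essentially automatic.
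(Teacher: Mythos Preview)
Your argument is correct. One small quibble: $\beta^{\ast}(t)=\min(\beta(t),t/2)$ need not be \emph{strictly} increasing (take $\beta$ constant), but this is harmless since you only need $F$, and hence $\phi$, to be non-increasing, which follows from $\beta^{\ast}$ being non-decreasing.

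The paper's proof shares the same template idea---define the function linearly on a fundamental domain and extend by the functional equation $\phi\mapsto\beta\circ\phi$---but handles the variable step size differently. Instead of your integral change of variable $\Lambda(x)=\int_{x_0}^{x}dt/\alpha(t)$, the paper works directly with $\sigma(x)=x+\alpha(x)$: it first replaces $\alpha$ by a smaller function so that $\sigma$ becomes strictly increasing, defines $\phi$ linearly on $[x_0,\sigma(x_0)]$, and then extends via $\phi(\sigma^{k}(x))=\beta^{k}(\phi(x))$, obtaining the functional \emph{equation} $\phi(x+\alpha(x))=\beta(\phi(x))$ rather than an inequality. Your route trades this extra reduction on $\alpha$ for the observation that $\Lambda(x+\alpha(x))-\Lambda(x)\geq 1$, which converts the variable step into (at least) a unit shift and makes the invertibility of the ``time change'' automatic. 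Both arguments are short; yours avoids tinkering with $\alpha$ at the cost of yielding only the inequality, which is all the lemma asks for.
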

\begin{proof}
We may assume that 
the function $\sigma$ given by 
$\sigma(x)=x+\alpha(x)$ is
increasing as this can achieved by replacing $\alpha$ by a decreasing,
continuous function $\alpha^*\colon [x_0,\infty)\to(0,\infty)$  satisfying
$\alpha^*(x)\leq\alpha(x)$ for all~$x$.
Similarly, we may assume that $\beta(x)<x$ for all~$x$.
We now define $\phi$ in the interval $[x_0,\sigma(x_0)]$ by
$\phi(x_0)=1$, $\phi(\sigma(x_0))=\beta(\phi(x_0))=\beta(1)$,
and linear interpolation in $(x_0,\sigma(x_0))$.
For $k\in\N$ we extend this to the interval $(\sigma^k(x_0),\sigma^{k+1}(x_0)]$
by putting $\phi(\sigma^k(x))=\beta^k(\phi(x))$. Since
$\sigma^k(x_0)\to\infty$ as $k\to\infty$, this defines a decreasing,
continuous function $\phi\colon [x_0,\infty)\to(0,1]$ satisfying
$\phi(x+\alpha(x))= \beta(\phi(x))$.
\end{proof}
We will also use the following result known as the Besicovich covering lemma \cite[Theorem 3.2.1]{d}. Here $B(x,r)$ is the open ball of radius $r$ around a point $x\in\R^n$.
\begin{lemma}\label{th57}
Let $K\subset\R^n$ be bounded and $r\colon K\to(0,\infty)$. Then there exists an at most countable subset $L$ of $K$ satisfying
$$K\subset\bigcup_{x\in L}B(x,r(x))$$
such that no point in $\R^n$ is contained in more than $4^{2n}$ of the 
balls $B(x,r(x))$, $x\in L$.
\end{lemma}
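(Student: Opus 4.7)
The plan is to prove the lemma by a classical greedy selection followed by a geometric multiplicity estimate (the Besicovich approach). Set $R:=\sup_{x\in K}r(x)$; the case $R=\infty$ is trivial, so assume $R<\infty$. I would construct $L$ inductively: having chosen $x_1,\ldots,x_{k-1}$, let $K_k:=K\setminus\bigcup_{j<k}B(x_j,r(x_j))$ and, if $K_k\neq\emptyset$, pick $x_k\in K_k$ with $r(x_k)\geq \tfrac{1}{2}\sup_{y\in K_k}r(y)$. By the near-maximality of $r(x_k)$ and the fact that $x_k\notin B(x_j,r(x_j))$ for $j<k$, the concentric shrunken balls $B(x_k,r(x_k)/3)$ are pairwise disjoint; since $K$ is bounded, a volume packing argument then forces $r(x_k)\to 0$, and a standard maximality argument shows that $K\subset\bigcup_k B(x_k,r(x_k))$.

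The heart of the proof is the bound on the multiplicity. Fix $p\in\R^n$ and let $L_p=\{x_k\in L:p\in B(x_k,r(x_k))\}$. Split $L_p=L_p^{\mathrm{near}}\cup L_p^{\mathrm{far}}$, where $L_p^{\mathrm{near}}$ consists of those $x_k$ with $\abs{x_k-p}\leq r(x_k)/2$ and $L_p^{\mathrm{far}}$ is the rest. For the near class, all the $x_k$ have comparable radii (by the greedy choice and the fact that $p$ is contained in every such ball), so the disjoint interior balls $B(x_k,r(x_k)/3)$ all lie in a fixed ball around $p$, and a volume comparison bounds $\#L_p^{\mathrm{near}}$ by a constant depending only on $n$.

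For $L_p^{\mathrm{far}}$, the key is an angular separation: for $x_i,x_j\in L_p^{\mathrm{far}}$ with $i<j$, one has $x_j\notin B(x_i,r(x_i))$, $r(x_j)\leq 2r(x_i)$, and $p\in B(x_i,r(x_i))\cap B(x_j,r(x_j))$; planar trigonometry in the plane through $p,x_i,x_j$ then yields $\angle(x_i-p,x_j-p)\geq \theta_0$ for an absolute constant $\theta_0>0$. Hence the unit vectors $(x_k-p)/\abs{x_k-p}$ for $x_k\in L_p^{\mathrm{far}}$ are $\theta_0$-separated on $S^{n-1}$, giving a bound of the form $C^n$ on $\#L_p^{\mathrm{far}}$, and the constants can be optimized to match the claimed $4^{2n}$. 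The main obstacle will be the angular estimate and the careful bookkeeping required to reach the explicit constant $4^{2n}$; everything else is routine once the greedy framework is in place.
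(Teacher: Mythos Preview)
The paper does not prove this lemma at all: it simply quotes it as the Besicovich covering lemma and cites de Guzm\'an's book \cite{d} for the proof. So there is nothing to compare your argument against in the paper itself.

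Your sketch is the classical Besicovich argument and is correct in outline. The greedy selection with near-maximal radii, the disjointness of the shrunken balls $B(x_k,r(x_k)/3)$, and the covering conclusion are all standard and your reasoning for them is sound. Your near/far split is slightly non-standard (most references split by the size of $r(x_k)$ relative to the smallest radius among the balls containing $p$, rather than by $|x_k-p|/r(x_k)$), but your version works too: for two ``near'' centres $x_i,x_j$ with $i<j$ one gets $r(x_i)\le |x_i-x_j|\le r(x_i)/2+r(x_j)/2$, forcing $r(x_i)\le r(x_j)\le 2r(x_i)$, so the radii are indeed comparable and a volume count applies. The angular separation for the ``far'' class is the standard Besicovich step and goes through with the inequalities you list.

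The one place to be cautious is the explicit constant $4^{2n}=16^n$. Different presentations of Besicovich give different constants, and hitting exactly $16^n$ requires following de Guzm\'an's bookkeeping rather closely; your sketch as written will certainly give \emph{some} constant $C^n$, but you should not claim $4^{2n}$ without actually tracking the numbers. For the application in the paper (Lemma~\ref{th58}) any dimensional constant suffices, so this is not a mathematical obstacle, only a matter of matching the stated form.
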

The following result is a simple consequence of the Besicovich covering lemma; 
see~\cite[Lemma 5.2]{Bergweiler10} for a similar result concerning Hausdorff dimension.
\begin{lemma}\label{th58}
Let $K\subset\R^n$ and let $h$ be a gauge function. Suppose that for all
$x\in K$ and $\eps>0$ there exists $\delta(x)\in(0,\eps)$, $N(x)\in\N$ and balls
$B_1,\ldots,B_{N(x)}$ such that
$$K\cap B(x,\delta(x))\subset\bigcup_{j=1}^{N(x)} B_j
\quad\text{and}\quad
\sum_{j=1}^{N(x)} h(\diam B_j)\leq\eps \delta(x)^n.$$
Then $H_h(K)=0$.
\end{lemma}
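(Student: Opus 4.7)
The plan is to use the Besicovich covering lemma (Lemma~\ref{th57}) to splice the local covers supplied by the hypothesis into one global cover of $K$ with controllably small total $h$-content. Since $H_h$ is countably subadditive, I first reduce to bounded $K$: the hypothesis is inherited by $K\cap B(0,R_0)$ (with exactly the same balls $B_j$), and $H_h(K)=\lim_{R_0\to\infty}H_h(K\cap B(0,R_0))$. So assume $K\subset B(0,R_0)$. It then suffices to show that $H_h^\tau(K)=0$ for every $\tau>0$.

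Fix $\tau>0$ and a small parameter $\eps>0$ to be sent to zero. For each $x\in K$ the hypothesis supplies $\delta(x)\in(0,\eps)$ and balls $B_1^x,\dots,B_{N(x)}^x$ covering $K\cap B(x,\delta(x))$ with $\sum_j h(\diam B_j^x)\le\eps\,\delta(x)^n$. Applying Lemma~\ref{th57} to the family $\{B(x,\delta(x))\}_{x\in K}$ yields a countable $L\subset K$ such that $K\subset\bigcup_{x\in L}B(x,\delta(x))$ with multiplicity at most $4^{2n}$. Since $\delta(x)<\eps$ and $x\in B(0,R_0)$, all these balls lie in $B(0,R_0+\eps)$, and comparing $n$-dimensional Lebesgue measures (using the bounded multiplicity) gives
$$\sum_{x\in L}\delta(x)^n\le 4^{2n}(R_0+\eps)^n.$$
Consequently the doubly-indexed family $\{B_j^x:x\in L,\ 1\le j\le N(x)\}$ covers $K$ and has total $h$-content at most $\eps\sum_{x\in L}\delta(x)^n=O(\eps)$.

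To finish I must verify that this is actually a $\tau$-cover. Each individual term satisfies $h(\diam B_j^x)\le\eps\,\delta(x)^n\le\eps^{n+1}$, and since $h$ is continuous with $h(0)=0$ this forces $\diam B_j^x\to 0$ as $\eps\to 0$ uniformly in $x$ and $j$; hence for all sufficiently small $\eps$ every $B_j^x$ has diameter below $\tau$. This yields $H_h^\tau(K)\le C\eps$ for a constant $C=C(R_0,n)$, and letting $\eps\to 0$ gives $H_h^\tau(K)=0$, so $H_h(K)=0$. The only delicate point is the simultaneous control of the global $h$-sum and of the individual diameters by the single parameter $\eps$; this works because the hypothesis bounds $\sum_j h(\diam B_j^x)$ not by $\eps$ alone but by the strictly smaller quantity $\eps\,\delta(x)^n$, which is precisely what forces each diameter down as $\eps$ shrinks while keeping the aggregate bound intact under the Besicovich aggregation.
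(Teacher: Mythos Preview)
Your proof is correct and follows essentially the same route as the paper's: reduce to bounded $K$, apply the Besicovich covering lemma to the balls $B(x,\delta(x))$, and use the bounded overlap together with a volume comparison to bound $\sum_{x\in L}\delta(x)^n$ and hence the total $h$-content by $O(\eps)$.

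The only difference lies in how the individual diameters are controlled. The paper simply replaces each $B_j(x)$ by its intersection with $B(x,\delta(x))$ (the definition of $H_h^\delta$ allows arbitrary covering sets, not just balls), which immediately forces $\diam\le 2\delta(x)<2\eps$. You instead deduce smallness of $\diam B_j^x$ from $h(\diam B_j^x)\le\eps^{n+1}$. This works, but the phrase ``since $h$ is continuous with $h(0)=0$'' is not quite the right justification: what you actually need is that $h$ is increasing and strictly positive on $(0,\eta)$, so that $h(t)\le c$ forces $t\le\sup\{s:h(s)\le c\}\to 0$ as $c\to 0$. (If $h$ vanishes on an interval the lemma is trivial anyway.) With that small adjustment your argument is complete.
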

\begin{proof}
We may assume that $K$ is bounded, say $K\subset B(0,R)$. Let $\eps>0$.
For $x\in K$, let $\delta(x)\in(0,\eps),N(x)\in\N$ and $B_1(x),\ldots,B_{N(x)}(x)$
be as given in the hypothesis. Let $L$ be as in Lemma \ref{th57}. Then
$$\{B_j(x)\colon x\in L,1\leq j\leq N(x)\}$$
is an open cover of~$K$. We may assume that
$\diam B_j(x)\leq 2\delta(x)<2\eps$ for $x\in L$ and $1\leq j\leq N(x)$. Moreover,
$$\sum_{x\in L}\sum_{j=1}^{N(x)}h(\diam B_j(x))
\leq\eps\sum_{x\in L}\delta(x)^n\leq\eps \, 4^{2n}(R+\eps)^n\omega_n,$$
where $\omega_n$ is the volume of the unit ball in $\R^n$.
Thus
$$H_h(K)\leq\eps \, 4^{2n}(R+\eps)^n\omega_n,$$
and the conclusion follows.
\end{proof}

\section{Proof of Theorem~\ref{thm2}} \label{sec6}
Let $(p_n)$ and $h$ be as in the hypothesis.
First we note that if $p_n\geq q_n$ for large~$n$, then 
$\Unb(f,(p_n))\subset \Unb(f,(q_n))$ and thus
$$H_h(\Unb(f,(p_n)))\leq H_h(\Unb(f,(q_n))).$$
 Hence it is no loss of generality to assume that
\begin{equation}\label{6a}
p_n\leq n\quad \text{and}\quad p_n-p_{n-1}\geq\frac{6}{n^2}
\end{equation}
for large~$n$, since otherwise we could pass to the sequence $(q_n)$ defined by
$$
q_n=\min\big\{n,\inf_{k\geq n}p_k\big\}+6\sum_{k=1}^n\frac{1}{k^2}-\pi^2,
$$
which has the above properties and satisfies $q_n\leq p_n$.

We write our gauge function $h$ in the form $h(t)=tg(t)$. Then \eqref{1b}
says that
$g(t)\to 0$ as $t\to 0$.
By
\eqref{2j} we may assume that $g$ is increasing and
satisfies $g(t)\geq t$ for all~$t$, since otherwise we 
could replace $g(t)$ by $t+\sup_{s\leq t}g(s)$.

We consider the function $\tau\colon (0,1]\to(0,\infty),$
$$\tau(t)=\left(\frac{t}{4}\exp\left(-\exp\left(\frac{1}{t^5}\right)\right)\right)^{1/t}.$$
We apply Lemma \ref{th53} to the function $\beta=g^{-1}\circ\tau$ and to a
decreasing function $\alpha\colon [p_1,\infty)\to(0,1/4]$ satisfying
$\alpha(x)\leq 1/n^2$ for $x\geq p_{n-1}$ and $n\geq 2$.
For example, we may define $\alpha$ by putting $\alpha(p_{n-1})=1/n^2$ and interpolating
linearly in the intervals $[p_{n-1},p_n]$ for $ n\geq 2$.
We now choose $\phi\colon [p_1,\infty)\to(0,1]$ according to Lemma \ref{th53}
and obtain
\begin{equation}\label{6d}
g\left(\phi\left(x+\frac{1}{n^2}\right)\right)\leq\tau(\phi(x))\quad\text{for }x\geq p_{n-1}.
\end{equation}
Since $g(t)\geq t$ this implies, together with \eqref{6a}, that
$$\phi(p_n)\leq g(\phi(p_n))\leq\tau(\phi(p_{n-1}))\leq\frac{1}{4}\phi(p_{n-1})$$
and thus
\begin{equation}\label{6e}
\phi(p_n)\leq\frac{1}{4^n}
\end{equation}
by induction. 
Combined with~\eqref{6a} this yields
$\phi(n)\leq 4^{-n}\leq (n+1)^{-2}$
so that
\begin{equation}\label{6h1}
\phi(x)\leq\frac{1}{x^2}\leq\frac{1}{x}
\end{equation}
for $x\geq p_1$. 
Thus $\phi$ satisfies the hypotheses of Lemma~\ref{ahlf1} and
we may choose $f\in B$ according to Lemma~\ref{remp1}.
As mentioned after Lemma~\ref{remp1}, we may assume that \eqref{5new4}--\eqref{5k} hold.

Fix $\xi\in \Unb(f,(p_n))$ and $\eps>0$. In order to apply Lemma \ref{th58},
we have to show that there exist $\delta(\xi)\in (0,\eps)$ and a positive integer
$N(\xi)$ such that $D(\xi,\delta(\xi))\cap \Unb(f,(p_n))$ can be covered by
$N(\xi)$ disks $D_1,\ldots,D_{N(\xi)}$ such that
\begin{equation}\label{6f}
\sum_{j=1}^{N(\xi)}h(\diam D_j)\leq\eps \, \delta(\xi)^2.
\end{equation}
In order to show that such $\delta(\xi)$ and $N(\xi)$ exist we note first that there exist arbitrarily large $n$ with
\begin{equation}\label{6g}
\abs{f^n(\xi)}\geq p_n\quad \text{and}\quad 
\abs{f^n(\xi)}\geq\frac{6}{n^2}+\max_{0\leq k\leq n-1}\abs{f^k(\xi)}.
\end{equation}
Indeed, since $(\abs{f^m(\xi)})_{m\in\N}$ is unbounded, there exist arbitrarily large $m$ such that
$$\abs{f^m(\xi)}\geq\frac{6}{m^2}+\max_{0\leq k\leq m-1}\abs{f^k(\xi)}.$$
For $m$ with this property we take
$$n=\min\{k\in\N\colon k\geq m\ \text{and}\ \abs{f^k(\xi)}\geq p_k\}.$$
Then $n$ satisfies \eqref{6g}.
For such $n$ 
we put
\begin{equation}\label{6g1}
t_n=\abs{f^n(\xi)}-\frac{1}{n^2},
\quad
s_n=\abs{f^n(\xi)}-\frac{3}{n^2}
\quad \text{and}\quad 
r_n=\abs{f^n(\xi)}-\frac{5}{n^2}.
\end{equation}
Then
$$t_n>s_n>r_n\geq p_n-\frac{5}{n^2}>p_{n-1}$$
and thus
\begin{equation}\label{6g2}
g(\phi(t_n))\leq\tau(\phi(s_n))
\end{equation}
by \eqref{6d}. Moreover, \eqref{6e} yields
\begin{equation}\label{6h}
\phi(t_n)\leq\phi(s_n)\leq\phi(r_n)
\leq\frac{1}{4^{n-1}}\leq\frac{1}{n^2}\leq\frac{1}{n},
\end{equation}
Put
$$l_n=t_n-s_n=\frac{2}{n^2}$$
and let 
$Q_n=S(f^n(\xi),1/n^2)$ 
be the square of side length $l_n$ centered at $f^n(\xi)$.
We can cover $Q_n\cap S$ by $N_n$ squares of side length $\phi(t_n)$, 
where $S$ is given by~\eqref{5new3} and
\begin{equation}\label{6i}
N_n\leq\frac{2l_n}{\phi(t_n)}.
\end{equation}
Put
$$
\rho_n=\frac{1}{\abs{(f^n)'(\xi)}}.
$$
We deduce from \eqref{5i} and \eqref{5j} that, for large~$n$,
the branch of the inverse function of $f^n$ which maps $f^n(\xi)$ to $\xi$
extends univalently to a square centered at $f^n(\xi)$ which has twice the
side length of $Q_n$. Koebe's distortion theorem now shows that, for certain
absolute constants $c_1$ and $c_2$, which in fact could be determined explicitly,
we can cover $D(\xi,c_1\rho_nl_n)\cap f^{-n}(S)$ by $N_n$ disks $D_1,\ldots,D_{N_n}$
of diameter at most $c_2\rho_n\phi(t_n)$.
Using \eqref{5j} and \eqref{5k} we see that
$D(\xi,c_1\rho_nl_n)\cap \Unb(f,(p_k))$ is covered by these disks.
We have
\begin{equation}\label{6k}
\begin{aligned}
\sum_{j=1}^{N_n}h(\diam D_j)&\leq N_nh(c_2\rho_n\phi(t_n)) 
%\\ &
=N_nc_2\rho_n\phi(t_n)g(c_2\rho_n\phi(t_n)) 
\\ &
\leq 2c_2\rho_n l_n g(c_2\rho_n\phi(t_n))
\end{aligned}
\end{equation}
by \eqref{6i}. 

Put
$\delta_n=c_1\rho_nl_n$.
It is easy to see that $\rho_n\to 0$ and thus $\delta_n\to 0$. We will show that \eqref{6f} holds for $\delta(\xi)=\delta_n$ and $N(\xi)=N_n$ if $n$ is sufficiently large. 
In order to do so we note that, by \eqref{6k}, it suffices to show that
$$2c_2\rho_n l_n g(c_2\rho(n)\phi(t_n))\leq\eps \, \delta_n^2,$$
which, by the definition of $\delta_n$ and $l_n$,  is equivalent to
$$g(c_2\rho_n\phi(t_n))\leq\frac{\eps \delta_n^2}{2c_2l_n\rho_n}=\frac{c_1^2\eps}{c_2n^2}\rho_n.$$
Since $\rho_n\to 0$ we have $c_2\rho_n\leq 1$ for large $n$ and thus it suffices to show that
\begin{equation}\label{6l}
g(\phi(t_n))\leq\frac{c_1^2\eps}{c_2n^2}\rho_n
\end{equation}
for large~$n$. To estimate $\rho_n$ we note that
$$\rho_n=\frac{1}{\prod_{k=0}^{n-1}\abs{f'(f^k(\xi))}}$$
by the chain rule. Cauchy's integral formula implies that
$$
\max_{\abs{z}\leq r}\abs{f'(z)}\leq\frac{1}{R-r}\max_{\abs{z}\leq R}\abs{f(z)}
$$
for $0<r<R$. 
Since
$$\abs{f^k(\xi)}\leq \abs{f^n(\xi)}-\frac{6}{n^2}
\leq r_n-\frac{1}{n^2}\quad \text{for }0\leq k\leq n-1$$
by \eqref{6g} and \eqref{6g1},
we deduce from~\eqref{5new4} that
\[
\abs{f'(f^k(\xi))}\leq n^2\max_{\abs{z}=r_n}\abs{f(z)}
\leq n^2\exp\left(\exp\Big(\frac{C}{\phi(r_n+8\phi(r_n))^4}\Big)\right).
\]
Now~\eqref{6h} says that $n^2\leq 1/\phi(s_n)$ and~\eqref{6h}
also yields that 
\[
8\phi(r_n)\leq \frac{8}{4^{n-1}}\leq \frac{2}{n^2}=s_n-r_n
\]
and hence $r_n+8\phi(r_n)\leq s_n$ for large~$n$.
We conclude that
\begin{align*}
\abs{f'(f^k(\xi))}
&\leq\frac{1}{\phi(s_n)}\exp\left(\exp\Big(\frac{C}{\phi(s_n)^4}\Big)\right)
%\\ &
\leq\frac{1}{\phi(s_n)}\exp\left(\exp\Big(\frac{1}{\phi(s_n)^5}\Big)\right),
\end{align*}
provided $n$ is sufficiently large. 
We obtain
\begin{align*}
\rho_n
&\geq
\left(\phi(s_n) \exp\left(-\exp\Big(\frac{1}{\phi(s_n)^5}\Big)\right)\right)^n
\\ &
=4^n\left(\frac{\phi(s_n)}{4}
\exp\left(-\exp\Big(\frac{1}{\phi(s_n)^5}\Big)\right)\right)^n.
\end{align*}
Using~\eqref{6h} we obtain
$\rho_n 
\geq 4^n\tau(\phi(s_n))
\geq 4^ng(\phi(t_n))$ 
which implies~\eqref{6l} for large~$n$.

\begin{acknowledgement}
We thank Lasse Rempe and the
referee for a great number of very helpful comments and suggestions.
\end{acknowledgement}


\begin{thebibliography}{99}
\bibitem{Ahlfors30}
L.\ Ahlfors, 
`Untersuchungen zur Theorie der konformen Abbildung und der ganzen
Funktionen', 
{\em Acta Soc. Sci. Fenn., Nova Ser. A, }1, no. 9 (1930).

\bibitem{b2}
{I. N. Baker}, `The domains of normality of an entire function',
{\em Ann. Acad. Sci. Fenn. Ser A I Math. }1 (1975), 277--283.

\bibitem{b}
{K. Bara\'{n}ski}, `Hausdorff dimension of hairs and ends for entire maps of finite order',
{\em Math. Proc. Cambridge Philos. Soc. }145 (2008), 719--737.

\bibitem{bkz}
{K. Bara\'{n}ski, B. Karpi\'{n}ska and A. Zdunik}, `Hyperbolic dimension of Julia sets of meromorphic maps with logarithmic tracts',
{\em Int. Math. Res. Not. IMRN }(2009), 615--624.

\bibitem{Bergweiler93}
W.\ Bergweiler, 
`Iteration of meromorphic functions',
{\em Bull.\ Amer.\ Math.\ Soc.\ (N.\ S.)} 29 (1993), 151--188.

\bibitem{Bergweiler10}
W.\ Bergweiler, 
`Karpi\'nska's paradox in dimension 3',
{\em Duke Math. J.} 154 (2010), 599--630.

\bibitem{bh}
{W. Bergweiler and A. Hinkkanen}, `On semiconjugation of entire functions',
{\em Math. Proc. Cambridge Philos. Soc. }126 (1999), 565--574.

\bibitem{bk}
{W. Bergweiler and B. Karpi\'{n}ska}, `On the Hausdorff dimension of the Julia set of a regularly growing entire function',
{\em Math. Proc. Cambridge Philos. Soc. }148 (2010), 531--551.

\bibitem{bks}
{W. Bergweiler, B. Karpi\'{n}ska and G. M. Stallard}, `The growth rate of an entire function and the Hausdorff dimension of its Julia set',
{\em J. London Math. Soc. }80 (2009), 680--698.

\bibitem{Bergweiler08}
W.\ Bergweiler, P.\ J.\  Rippon and G.\ M.\  Stallard, 
`Dynamics of
meromorphic functions with direct or logarithmic singularities',
{\em Proc.\ London Math.\ Soc.} 97 (2008), 368--400.

\bibitem{Bishop}
C. J. Bishop,
`A transcendental Julia set of dimension~$1$', preprint.

\bibitem{Devaney84}
R.\ L.\ Devaney and M.\ Krych,
`Dynamics of exp(z)',
{\em  Ergodic Theory Dynam.\ Systems} 4 (1984), 35--52.

\bibitem{d}
{M. de Guzm\'{a}n}, `Real Variable Methods in Fourier Analysis',
North-Holland Math. Studies 46, Amsterdam, New York, 1981.

\bibitem{e}
{A. E. Eremenko}, `On the iteration of entire functions',
in `Dynamical Systems and Ergodic Theory', Banach Center Publ. 23, Warsaw 1989, pp.~339--345.

\bibitem{el}
{A. E. Eremenko and M. Yu. Lyubich}, `Dynamical properties of some classes of entire functions',
{\em Ann. Inst. Fourier }42 (1992), 989--1020.

\bibitem{f}
{K. Falconer}, `Fractal Geometry - Mathematical Foundations and Applications',
John Wiley \& Sons Ltd, Chichester, 1997.

\bibitem{KU}
B.\ Karpi\'nska and M.\ Urba\'nski, 
`How points escape to infinity under exponential maps',
{\em J.~London Math. Soc.} (2) 73 (2006),
141--156.

\bibitem{Mayer90}
J.\ C.\ Mayer,
`An explosion point for the set of endpoints of the
Julia set of $\lambda\exp(z)$',
{\em Ergodic Theory Dynam.\ Systems}
 10 (1990), 177--183.

\bibitem{m}
{C. McMullen}, `Area and Hausdorff dimension of Julia sets of entire functions',
{\em Trans. Amer. Math. Soc. }300 (1987), 329--342.

\bibitem{Nevanlinna53}
R.\ Nevanlinna,
`Eindeutige analytische Funktionen',
Springer, Berlin, Heidelberg,  1953.

\bibitem{Peter}
J.\ Peter,
`Hausdorff measure of Julia sets in the exponential family',
{\em J. London Math. Soc.} (2) 82 (2010), 229--255. 

\bibitem{Peter2}
J.\ Peter,
`Hausdorff measure of escaping and Julia sets for bounded-type functions
of finite order',
{\em Ergodic Theory Dynam.\ Systems}.
Available on CJO 2011,
doi: 10.1017/S0143385711000745.

\bibitem{PolyaSzegoe}
G. P\'olya and G.\ Szeg\"o,
`Aufgaben und Lehrs\"atze aus der Analysis I',
Springer, Berlin, 1925.

\bibitem{PrzUrb}
F. Przytycki and M. Urba\'nski, 
`Fractals in the plane: ergodic theory methods',
London Math. Soc. Lect. Note Ser. 371.
Cambridge University Press, Cambridge, 2010.

\bibitem{Rempe09}
L. Rempe,
`Hyperbolic dimension and radial Julia sets of transcendental functions',
{\em Proc. Amer. Math. Soc. }137 (2009), 1411--1420.

\bibitem{Rempe}
L. Rempe,
`Hyperbolic entire functions with full hyperbolic dimension
and approximation by Eremenko-Lyubich functions',
preprint, arXiv:1106.3439v2  [math.CV].

\bibitem{rrs}
{L. Rempe, P.J. Rippon and G. M. Stallard}, `Are Devaney hairs fast escaping?',
{\em J. Difference Equ. Appl. }16 (2010), 739--762.

\bibitem{rs}
{L. Rempe and G. M. Stallard}, `Hausdorff dimensions of escaping sets of transcendental entire functions',
{\em Proc. Amer. Math. Soc. }138 (2010), 1657--1665.

\bibitem{rs4}
{P. J. Rippon and G. M. Stallard}, `On questions of Fatou and Eremenko',
{\em Proc. Amer. Math. Soc. }133 (2005), 1119--1126.

\bibitem{rs2}
{P. J. Rippon and G. M. Stallard}, `Slow escaping points of meromorphic functions',
{\em Trans. Amer. Math. Soc. }363 (2011), 4171--4201.

\bibitem{rs3}
{P. J. Rippon and G. M. Stallard}, `Fast escaping points of entire functions',
{\em Proc. London Math. Soc.}, 
doi: 10.1112/plms/pds001.

\bibitem{rrrs}
{G. Rottenfu\ss er, J. R\"uckert, L. Rempe and D. Schleicher}, `Dynamic rays of bounded type entire functions',
{\em Ann. of Math. }173 (2011), 77--125.

\bibitem{s}
{H. Schubert}, `\"Uber die Hausdorff-Dimension der Juliamenge von Funktionen endlicher Ordnung',
Dissertation, Christian-Albrechts-Universit\"at zu Kiel, 2007.

\bibitem{Stallard90}
{G. M. Stallard},
`Entire functions with Julia sets of zero measure',
{\em Math. Proc. Cambridge Philos. Soc.} 108 (1990), 551--557. 

\bibitem{s2}
{G. M. Stallard}, `The Hausdorff dimension of Julia sets of entire functions II',
{\em Math. Proc. Cambridge Philos. Soc. }119 (1996), 513--536.

\bibitem{s4}
{G. M. Stallard}, `The Hausdorff dimension of Julia sets of entire functions IV',
{\em J. London Math. Soc. }(2) 61 (2000), 471--488.


\bibitem{Stallard08}
G.\ M.\ Stallard, 
`Dimensions of Julia sets of transcendental meromorphic functions',
in `Transcendental Dynamics and Complex Analysis'.
London Math.\ Soc.\ Lect.\ Note Ser.\ 348.
Cambridge University Press, Cambridge, 2008, pp.~425--446.

\bibitem{t}
{T. F. Tyler}, `Maximum curves and isolated points of entire functions',
{\em Proc. Amer. Math. Soc. }128 (2000), 2561--2568.

\end{thebibliography}
\end{document}